\renewcommand{\textbf}[1]{{\bfseries\boldmath #1}}
\newcommand{\vocab}[1]{\textbf{\textcolor{BrickRed}{\boldmath #1}}}
\definecolor{mylinkcolor}{rgb}{0.0,0.0,0.7}
\definecolor{myurlcolor}{rgb}{0.0,0.0,0.7}
\declaretheorem{theorem}[name=Theorem, numberwithin=section]
\declaretheorem{lemma}[name=Lemma, sibling=theorem]
[name=Claim, sibling=theorem]
[name=Proposition, sibling=theorem]
\declaretheorem{corollary}[name=Corollary, sibling=theorem]
\declaretheorem{definition}[style=definition, name=Definition, sibling=theorem]
[style=definition, name=Remark, sibling=theorem]
[style=definition, name=Example, sibling=theorem]
[style=definition, name=Open Problem, sibling=theorem]
\title{On $t$-intersecting Families of Spanning Trees}
\author{Pitchayut Saengrungkongka}
\begin{document}
\begin{abstract}
We prove that there exists a constant $c>0$ 
such that for all integers $2\leq t\leq cn$,
if $\calA$ is a collection of spanning trees in $K_n$
such that any two intersect at at least $t$ edges,
then $|\calA|\leq 2^tn^{n-t-2}$.
This bound is tight; the equality is achieved when $\calA$ 
is a collection of spanning trees containing 
a fixed $t$ disjoint edges.
This is an improvement of a result by Frankl, Hurlbert,
Ihringer, Kupavskii, Lindzey, Meagher, and Pantagi,
who proved such a result for $t=O\left(\frac n{\log n}\right)$.
\end{abstract}
\maketitle

\section{Introduction}
\subsection{Our Result}
A family $\calA$ of spanning trees of $K_n$ is said to be 
\vocab{$t$-intersecting} if and only if 
$T_1$ and $T_2$ intersect at at least $t$ edges
for all $T_1,T_2\in\calA$.
A natural question to ask is ``what is the maximum 
size of a $t$-intersecting family of spanning trees in $K_n$?''
We answer this question when $t\leq cn$ for 
an absolute constant $c$.
More specifically, we prove the following theorem.
\begin{theorem}
\label{thm:main}
There exists a constant $c>0$ such that 
for all nonnegative integers $t$ and $n$ such that 
$2\leq t\leq cn$, 
if $\calA$ is a $t$-intersecting family 
of spanning trees in $K_n$, then 
$$|\calA|\leq 2^tn^{n-t-2}.$$
\end{theorem}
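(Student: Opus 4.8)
The plan is to anchor everything on the generalized Cayley formula and then reduce an arbitrary $t$-intersecting family to (essentially) a single ``star'' through a $t$-matching.

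\emph{Backbone.} Recall that if $F$ is a forest on $[n]$ with $s$ edges whose components have sizes $a_1,\dots,a_{n-s}$, then $K_n$ has exactly $n^{n-s-2}\prod_i a_i$ spanning trees containing $F$. Since $a\le 2^{a-1}$ for every integer $a\ge 1$, one has $\prod_i a_i\le 2^{\sum_i(a_i-1)}=2^{s}$, with equality precisely when $F$ is a matching together with isolated vertices. Thus each star $\calA_F:=\{T : F\subseteq T\}$ with $|E(F)|=s$ has size at most $2^{s}n^{n-s-2}$, and this quantity is decreasing in $s$; so any $t$-intersecting family that sits inside a star through a forest with at least $t$ edges already obeys the bound, with equality forcing a $t$-matching. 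It therefore suffices to reduce the general case to (essentially) this one.

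\emph{A greedy step giving the right shape.} Fix $T_0\in\calA$. Since every $T\in\calA$ meets the $(n-1)$-edge tree $T_0$ in at least $t$ edges, averaging produces an edge $e_1\in T_0$ contained in at least $\frac{t}{n-1}|\calA|$ members of $\calA$ — a \emph{constant} fraction, as $t=\Theta(n)$. Restrict to those trees and contract $e_1$: one obtains a family of spanning trees of $K_n/e_1$ that is $(t-1)$-intersecting, because deleting a common edge drops each pairwise intersection by at most one (incidental identifications of other edges only help). Iterating $t$ times contracts a $t$-edge forest $F\subseteq K_n$ and lands in a family of spanning trees of $K_n/F$, of size at least $\bigl(\prod_{i=1}^{t}\frac{t-i+1}{n-i}\bigr)|\calA|=\binom{n-1}{t}^{-1}|\calA|$; since $K_n/F$ has at most $2^{t}n^{n-t-2}$ spanning trees, this gives $|\calA|\le\binom{n-1}{t}2^{t}n^{n-t-2}$. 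This is the right shape, disfigured only by the spurious factor $\binom{n-1}{t}=2^{\Theta(t)}$ introduced by discarding, at each step, all trees missing the popular edge.

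\emph{Killing the overcount (the hard part).} To remove the $\binom{n-1}{t}$ I would replace the ``keep the popular edge'' step by a spread/junta decomposition in the spirit of Kupavskii--Zakharov: cover $\calA$ by a bounded number of stars $\calA_{F_1},\dots,\calA_{F_m}$ through forest cores together with an $r$-spread remainder $\calA'$. The $t$-intersecting hypothesis forces each surviving core $F_j$ to carry roughly $t$ edges — otherwise two members of $\calA_{F_j}$, whose agreement beyond $F_j$ is spread-out hence small, would intersect in fewer than $t$ edges — so the codegree bound caps each star at $2^{t}n^{n-t-2}$; and it forces the $F_j$ to overlap so heavily (their only ``rigid'' common edges lie in $F_i\cap F_j$) that they merge into a single core $F$, which must be a $t$-matching on pain of the strict loss $\prod_i a_i<2^{t}$. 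The remainder $\calA'$ is shown negligible by exploiting that in the spanning-tree universe the codegree of a set decays by a factor $\sim n/2$ per extra edge, not merely by a constant; this polynomial-in-$n$ decay rate is precisely what removes the $\log n$ loss built into black-box spread arguments and moves the admissible range from $t=O(n/\log n)$ to $t=O(n)$. A final bootstrapping step — pick $T^{\ast}\in\calA$ far from the common core and show it drives $|\calA|$ strictly below the target — upgrades the resulting $2^{t}(1+o(1))n^{n-t-2}$ to the exact bound and extracts the extremal families; one then takes $c$ small (in particular $c<1/2$, so $t$-matchings exist and $\prod_i a_i\le 2^{t}$ has room to spare). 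I expect the genuine obstacle to be exactly this remainder estimate — engineering a spread-type bound tuned to the matroid of spanning trees that survives $t$ linear in $n$ — together with pinning the constant in the core-merging step to be exactly $1$ rather than $1+o(1)$.
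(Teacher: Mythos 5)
Your outline correctly identifies the right high-level strategy, and your first two sections are sound: the Cayley-formula computation $|\calT_n[F]|=\bigl(\prod_i a_i\bigr)n^{n-s-2}\le 2^s n^{n-s-2}$ is exactly the paper's \Cref{cor:trivial_case}, and the greedy contraction argument giving $|\calA|\le\binom{n-1}{t}2^t n^{n-t-2}$ is a valid (if weak) warm-up. But the ``killing the overcount'' section is a sketch of intent, not a proof, and the gap it leaves open is exactly where all the work of the paper lives. You write that you ``expect the genuine obstacle to be exactly this remainder estimate'' and that one must ``engineer a spread-type bound tuned to the matroid of spanning trees that survives $t$ linear in $n$'' — that is a correct diagnosis, but the diagnosis is not a cure.

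Concretely, two ideas are missing. First, observing that the codegree in $\calT_n$ decays by a factor $\sim n/2$ per edge (i.e.\ that $\calT_n$ is $(n/2,n-1)$-spread) is not, by itself, enough to push past $t=O(n/\log n)$: applying the Kupavskii--Zakharov spread-approximation machinery off the shelf still produces cores of size $t+O(t\log_r n)$, which is $(1+o(1))t$ only when $t\log n\ll r\approx n$. The paper's key new step is a \emph{density boost} (its \Cref{lem:density_boost}): a direct double-counting argument showing that some set $X$ of size $t-\lceil n^{1-\delta}\rceil$ already captures an $e^{-\eps t}$-fraction of $\calA$; feeding that into the spread-family lemma then yields cores of size $(1+\eps)t$ rather than $t+\Theta(t\log_r n)$. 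Your proposal never produces anything playing this role. Second, your closing sentence — ``a final bootstrapping step \dots upgrades the resulting $2^t(1+o(1))n^{n-t-2}$ to the exact bound'' — waves at the most delicate part of the argument. To get the \emph{sharp} constant one needs to rule out the remainder $\calA'$ nonconstructively, and the paper does this by fixing any $T_0\in\calA'$ and showing, via the Lopsided Lov\'asz Local Lemma, that a nonnegligible number of trees through the $t$-matching $F$ avoid $T_0\setminus F$ entirely (its \Cref{lem:counting_avoiding_trees}); those trees cannot be in $\calA$, so $\calA$ cannot actually reach $|\calT_n[F]|$. This requires a separate case analysis depending on whether $T_0$ is star-like, and a separate enumeration of star-like trees. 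None of this appears in your proposal. In short: the backbone and the strategic framing are right, but the two lemmas that actually make the bound $2^t n^{n-t-2}$ (rather than $2^{t+o(t)}n^{n-t-2}$) are absent, and they are precisely the content of the theorem.
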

This is a direct improvement of a previous result by 
Frankl, Hurlbert, Ihringer, Kupavskii, Lindzey, Meagher, and Pantagi
\cite[Thm.~1.4]{trees}, who proved the above theorem for 
$t=O\left(\tfrac n{\log n}\right)$.
We note that the bound $2^t n^{n-t-2}$ is tight:
if $F$ is a union of $t$-disjoint edges 
and $\calA$ is the family of all spanning trees 
containing all edges in $F$, then by \Cref{cor:trivial_case} (a), we have $|\calA| = 2^tn^{n-t-2}$

In the case of $t=1$ and $n$ is sufficiently large,
the maximum size of $t$-intersecting family 
is not $2n^{n-3}$.
Rather, the union of 
\begin{itemize}
\item all stars; and
\item the set of trees containing edge $e$
\end{itemize}
is $1$-intersecting, and has size $2n^{n-3}+n-2$.
The same paper also proves that this is tight for all 
sufficiently large $n$:
\begin{theorem}[{\cite[Thm.~1.5]{trees}}]
\label{thm:original}
If $n\geq 2^{19}$ and $\calA$ is a $1$-intersecting 
family of spanning trees in $K_n$,
then $|\calA|\leq 2n^{n-3}+n-2$.
\end{theorem}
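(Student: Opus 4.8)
The plan is to isolate the contribution of the stars and reduce to a Hilton--Milner-type statement about non-star trees. Write $\mathrm{St}_v$ for the star with center $v$, that is, the spanning tree whose edges are exactly the edges incident to $v$. The structural point is that $\mathrm{St}_v$ is an \emph{isolated vertex} of the edge-disjointness graph on spanning trees: every spanning tree has an edge at $v$, so no spanning tree is edge-disjoint from $\mathrm{St}_v$. Hence, if $\calA$ is $1$-intersecting and we split $\calA=\calS\sqcup\calB$ with $\calS$ the stars in $\calA$ and $\calB$ the non-stars, the only genuine constraint is that $\calB$ be $1$-intersecting: any two stars meet in the edge joining their centers, and $\mathrm{St}_v$ meets every spanning tree in an edge at $v$. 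Since $|\calS|\le n$, \Cref{thm:original} follows from:

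\smallskip
\noindent\textbf{Main Lemma.} For $n$ sufficiently large, every $1$-intersecting family $\calB$ of non-star spanning trees of $K_n$ satisfies $|\calB|\le 2n^{n-3}-2$.
\smallskip

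\noindent This bound is attained: of the $2n^{n-3}$ spanning trees through a fixed edge $f=uv$, exactly two --- namely $\mathrm{St}_u$ and $\mathrm{St}_v$ --- are stars, so the $2n^{n-3}-2$ non-star trees through $f$ form such a family; combined with $|\calS|\le n$ this gives $|\calA|\le n+(2n^{n-3}-2)=2n^{n-3}+n-2$, matching the construction from the introduction.

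To prove the Main Lemma I would first dispose of the case where $\calB$ is $2$-intersecting: then \cite[Thm.~1.4]{trees} with $t=2$ yields $|\calB|\le 4n^{n-4}=o(n^{n-3})$, far below the target. Otherwise some $T_1,T_2\in\calB$ satisfy $E(T_1)\cap E(T_2)=\{f\}$ for a single edge $f$. Since every $T\in\calB$ meets both $T_1$ and $T_2$, whenever $f\notin E(T)$ the tree $T$ must contain an edge $a\in E(T_1)\setminus\{f\}$ and an edge $b\in E(T_2)\setminus\{f\}$ with $a\ne b$; and by the generalized Cayley formula (a spanning forest with $k$ components of sizes $a_1,\dots,a_k$ lies in $n^{k-2}\prod_i a_i$ spanning trees), the number of spanning trees containing two prescribed distinct edges is at most $4n^{n-4}$. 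This ``localization'' is exactly where one exploits that the members are trees rather than arbitrary $(n-1)$-element edge sets. Carrying this out, together with a stability analysis of families all of whose members share a common edge, should show that unless $\calB$ already equals the family of all non-star trees through one edge, $|\calB|$ falls well short of $2n^{n-3}-2$.

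I expect the main obstacle to be the ``spread'' regime, in which no edge lies in more than roughly $4n^{n-4}$ members of $\calB$ --- which is unavoidable once $|\calB|$ is close to $2n^{n-3}$, since the $\binom n2$ edges carry on average $\tfrac{2|\calB|}{n}$ members each. There the only cheap bound, $|\calB|\le\tfrac n2\max_e|\calB_e|$ from double counting incidences, yields only order $n^{n-3}$ --- the \emph{same} order as the target, and with the wrong constant --- while the tree-structure-free tools (Erd\H{o}s--Ko--Rado, Hilton--Milner) are exponentially too weak to even reach that order. One therefore needs a genuinely global argument using that every member is a spanning tree; a natural candidate is a Hoffman-type eigenvalue bound for the $S_n$-invariant edge-disjointness graph, suitably weighted to absorb its non-regularity (caused by the isolated stars), whose lower-order contributions should also be what produce the correction $n-2$ and force the large explicit threshold $n\ge 2^{19}$. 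Finally, upgrading $2n^{n-3}+o(n^{n-3})$ to the exact value $2n^{n-3}-2$ requires the stability step above, pinning every near-extremal $\calB$ down to the family of all non-star trees through some fixed edge.
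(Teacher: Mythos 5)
The first thing to note is that this paper does not prove \Cref{thm:original} at all: it is cited verbatim from \cite{trees}, and the introduction explains that the proof there uses the spread-approximation machinery (the same family of techniques the present paper develops for $t\geq 2$). So there is no ``paper's proof'' to compare against; what follows is an assessment of the proposal on its own terms.

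Your decomposition $\calA=\calS\sqcup\calB$ is correct as far as it goes: a star intersects every spanning tree, any two stars share the edge joining their centers, so $\calA$ is $1$-intersecting if and only if $\calB$ is, and $|\calS|\leq n$. But this reduction is circular --- your ``Main Lemma'' is \emph{equivalent} to \Cref{thm:original} (taking any $1$-intersecting $\calB$ of non-star trees and adjoining all $n$ stars shows the theorem implies the lemma, and the converse is your argument), so you have restated the problem rather than reduced it. The actual proof attempt of the Main Lemma then has a genuine gap. In the non-$2$-intersecting case you find $T_1,T_2$ with $T_1\cap T_2=\{f\}$ and argue that every $T\in\calB$ with $f\notin T$ must contain some $a\in T_1\setminus\{f\}$ and $b\in T_2\setminus\{f\}$, $a\neq b$, with at most $4n^{n-4}$ trees per pair. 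That gives at most $(n-2)^2\cdot 4n^{n-4}\approx 4n^{n-2}$ such trees, plus the $\leq 2n^{n-3}$ trees through $f$ --- a total that exceeds the target $2n^{n-3}-2$ by a factor of order $n$. This is not a stability deficit that a cosmetic refinement will close; as you yourself observe, the regime where no edge is heavily used requires ``a genuinely global argument,'' and you only speculate (Hoffman-type eigenvalue bounds, absorbing non-regularity from the isolated stars) without carrying anything out. The missing idea is precisely the substance of the proof --- in \cite{trees} it is supplied by the spread-approximation framework (a weak approximation by a low-uniformity $1$-intersecting family $\calS$, structure of such $\calS$, and LLLL-based counting of avoiding trees as in \Cref{lem:counting_avoiding_trees}). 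As written, the proposal is an honest problem reformulation and a correct account of the extremal configuration, but not a proof.
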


\subsection{Erd\H os-Ko-Rado Combinatorics}
The question of finding the maximum size 
of $t$-intersecting families of spanning trees 
has many variants.
The simplest variant dates back to 
the Erd\H os-Ko-Rado theorem \cite{erdos_ko_rado},
which concerns a family of sets.
A family of sets $\calA$ is $t$-intersecting 
if $|X_1\cap X_2|\geq t$ for all $X_1,X_2\in\calA$.
The Erd\H os-Ko-Rado theorem states that
if $n\geq 2k$, then any $1$-intersecting family 
$\calA\subseteq \binom{[n]}k$ has size at most 
$\binom{n-1}{k-1}$.
This theorem was later generalized to 
$t$-intersecting families, culminating with 
the Ahlswede-Khachatrian theorem \cite{sets},
which determines the maximum size of 
$t$-intersecting family $\calA\subseteq \binom{[n]}k$
for all triples $(n,k,t)$.

One may ask a similar question on permutations.
Let $S_n$ be the symmetric group on $[n]$.
A family of permutations $\calA\subseteq S_n$
is $t$-intersecting if for any two $\sigma_1,\sigma_2\in\calA$,
we have $\sigma_1(i)=\sigma_2(i)$ 
for at least $t$ values of $i$.
Like above, a natural question is to determine the maximum 
size of $t$-intersecting family of permutations in $S_n$,
which was answered by Kupavskii \cite{permutations_better}
for $t<(1-\eps)n$ and $n>n_0(\eps)$.

\subsection{Our Techniques}
Our proof is based on the technique of spread approximation,
which was introduced in \cite{spread_approx}
to solve Erd\H os-Ko-Rado type problems in various settings.
The idea of spread approximation is to approximate 
a $t$-intersecting family of sets $\calA$
with another $t$-intersecting family of sets $\calS$,
each of whose elements has smaller size than elements in $\calA$.
This technique was used to prove \Cref{thm:original}
in \cite{trees}.

On the other hand, the spread approximation technique 
has been refined to handle the larger value of $t$
(compared to $n$) in \cite{permutations_better}.
There, they first prove the ``density boost'' statement
\cite[Thm.~12]{permutations_better}, 
by finding a set of size close to $t$ 
that is contained in many elements in $\calA$.
Then, they use density boost to obtain a more accurate 
spread approximation. 
Our proof of \Cref{thm:main} follows the same strategy 
of obtaining a density boost first.
However, the key difference is that we were able 
to prove density boost by a more direct argument,
not using another spread approximation as in 
\cite{permutations_better}.

\subsection{Outline}
\Cref{sec:density_boost} proves the density boost statement
that there exists a set of size close to $t$
and is contained in $e^{-\eps t}$-fraction 
of elements in $\calA$.
Then, \Cref{sec:spread_approximation} uses this 
to obtain a stronger spread approximation,
resulting in \Cref{lem:strong_spread_approx}.
\Cref{sec:counting_trees} starts off by recalling 
some basic enumerative facts about trees.
Then, its main goal is to prove \Cref{lem:counting_avoiding_trees},
which counts the number of trees that contains $F$
but avoids any edge in $T_0\setminus F$.
This section uses the Lopsided Lov\'asz Local Lemma.
Finally, \Cref{sec:main_proof} puts everything together 
and proves \Cref{thm:main}.

\subsection*{Acknowledgements}
This research was conducted at 
the University of Minnesota Duluth REU 
with support from Jane Street Capital, the NSF Grant 2409861,
and donations from Ray Sidney and Eric Wepsic.
We thank Colin Defant and Joe Gallian for such 
a wonderful opportunity.
We also thank Noah Kravitz
for helpful discussions and feedback on this paper.

\section{Preliminaries}
\subsection{Background and Notations}
\label{subsec:notation}
Let $K_n$ denote a complete graph with $n$ vertices.
A \vocab{forest} is a graph without cycles.
A \vocab{tree} is a connected forest.
A forest is said to be \vocab{spanning} in $K_n$
if it uses the same vertex set as $K_n$.
A \vocab{star} is a tree in which there is 
one vertex adjacent to all other vertices.

We let $[n] = \{1,2,\dots,n\}$ be the standard $n$-element set.
For any set $X$, we let 
\begin{itemize}
\item $2^X$ be the set of all subsets of $X$.
\item $\binom{X}k$ 
be the set of all subsets of $X$ of size $k$.
\item $\binom{X}{\leq k}$
be the set of all subsets of $X$ of size at most $k$.
\end{itemize}
Calligraphic letters ($\calA$, $\calB$, \dots)
will often be used to denote families of sets.
As in \cite{spread_approx}, we use the following notation in dealing with 
families of sets:
for set $X$ and families $\calF$, $\calS$,
we let 
\begin{align*}
\calF[X] &:= \{F\in\calF : X\subseteq F\} \\
\calF(X) &:= \{F\setminus X : F\in\calF, X\subseteq F\} \\
\calF[\calS] &:= \bigcup_{A\in\calS} \calF[A].
\end{align*}

For any family $\calU\subseteq \binom{[N]}n$,
we let 
\begin{equation}
\label{eq:d_i}
c_i(\calU) := \max_{S\in\binom{[N]}i} |\calU[S]|,
\quad 
d_i(\calU) := \frac{c_i(\calU)}{|\calU|}
\end{equation}
(For example, $c_0(\calU) = |\calU|$ and $d_0(\calU) = 1$.)

We will need the following classical estimate
of binomial coefficients.
\begin{lemma}
\label{lem:binom_bound}
For any integers $n\geq k\geq 0$,
we have $\binom nk \leq (en/k)^k$.
\end{lemma}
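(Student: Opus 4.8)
The plan is the standard two-step estimate: first discard the lower factors in the numerator of the binomial coefficient, and then invoke a crude but sufficient lower bound for the factorial. Concretely, I would begin by dealing with the degenerate case $k = 0$, where both sides of the claimed inequality equal $1$ (reading $(en/k)^k$ as $1$ via the convention $0^0 = 1$); so from this point on I may assume $k \geq 1$. Then I would write $\binom nk = \frac{n(n-1)\cdots(n-k+1)}{k!}$ and observe that each of the $k$ factors in the numerator is at most $n$, which gives $\binom nk \leq n^k/k!$.

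The second step is the inequality $k! \geq (k/e)^k$, equivalently $e^k \geq k^k/k!$. This is immediate from the Taylor expansion $e^k = \sum_{j \geq 0} k^j/j!$: since every summand is nonnegative, the single term with $j = k$ already gives $e^k \geq k^k/k!$. Substituting this into the bound from the first step yields $\binom nk \leq n^k/(k/e)^k = (en/k)^k$, which is exactly the claim.

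I do not expect a genuine obstacle here; the argument is entirely elementary, and the only point requiring a moment's care is the $k = 0$ edge case together with the interpretation of the right-hand side there, which is why I dispose of it at the outset. If one preferred to avoid the exponential series, an alternative is a short induction on $k$ using $\binom nk = \frac{n-k+1}{k}\binom n{k-1}$ together with $\left(1 + \tfrac1{k-1}\right)^{k-1} \leq e$, but the series argument is the cleanest.
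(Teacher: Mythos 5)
Your proof is correct and follows the same decomposition as the paper: bound $\binom nk \leq n^k/k!$ and then apply $k! \geq (k/e)^k$. The only difference is how you establish the factorial inequality — you use the term $k^k/k!$ in the Taylor series of $e^k$, whereas the paper invokes the integral comparison $\log k! = \sum_{j\le k}\log j \geq \int_1^k \log x\,dx$ — but this is a cosmetic variation, not a different approach. (Incidentally, the paper's proof contains a typo, writing "$k! \leq (k/e)^k$" where the inequality must of course point the other way; your version states it correctly.)
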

\begin{proof}
Note that $\binom nk \leq \frac{n^k}{k!}$
and $k!\leq \left(\tfrac ke\right)^k$ 
(by integrating $\log x$).
Combining these two estimates gives the lemma.
\end{proof}
\subsection{Spread Families}
\label{subsec:spread}
\begin{definition}
A family $\calA\subseteq 2^{[N]}$ is said to be 
\vocab{$r$-spread} if for all $S\subseteq [N]$, we have 
$$|\calA[S]| \leq r^{-|S|}|\calA|.$$
A family $\calA\subseteq 2^{[N]}$ is said to be 
\vocab{$(r,t)$-spread} if $\calA(T)$ is $r$-spread 
for all $t$-element subset $T\subseteq [N]$.
\end{definition}
Observe that if $\calU$ is $(r,t)$-spread, then 
$d_i(\calU) \leq r^{-(i-j)} d_j(\calU)$
for all indices $i,j$ such that $i>j$ and $j\leq t$.

Spread families behave nicely in terms of 
applying the probabilistic method.
In the following lemma, a \vocab{$p$-random subset}
$W\subseteq [N]$ is a random subset obtained by 
including each element in $[N]$ independently with 
probability $p$.
\begin{lemma}[Spread Lemma, {\cite[Prop.~5]{spread_lemma}}]
\label{lem:spread_lemma}
Let $\calA\subseteq \binom{[N]}{\leq k}$ be an $r$-spread family
and $W$ be a $(\beta\delta)$-random subset of $[N]$.
Then,
$$\mathbb P(\text{there exists } F\in\calA
\text{ such that }F\subseteq W) 
\geq 1 - k\left(\frac{2}{\log_2(r\delta)}\right)^\beta.$$
\end{lemma}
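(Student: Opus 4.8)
The plan is to prove the Spread Lemma by the now-standard iterative argument. We may assume $\beta\delta \le 1$ and $\log_2(r\delta) > 2$, since otherwise $k\bigl(2/\log_2(r\delta)\bigr)^{\beta} \ge k \ge 1$ and there is nothing to prove. First I would replace the single $(\beta\delta)$-random set by the union of $\beta$ independent $\delta$-random subsets $W_1, \dots, W_\beta$ of $[N]$: since $1 - (1-\delta)^\beta \le \beta\delta$, a $(\beta\delta)$-random set stochastically dominates $W_1 \cup \cdots \cup W_\beta$, and the event ``some $F \in \calA$ is contained in the set'' is monotone, so it suffices to prove $\mathbb P\bigl(\exists\, F \in \calA : F \subseteq W_1 \cup \cdots \cup W_\beta\bigr) \ge 1 - k\bigl(2/\log_2(r\delta)\bigr)^{\beta}$.

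Write $L = \log_2(r\delta)$ and $\rho = \ln(L/2)/\delta$, so that $(1-\delta)^\rho \le e^{-\delta\rho} = 2/L$. The idea is to build a member of $\calA$ one element at a time: construct a chain $\emptyset = T_0 \subseteq T_1 \subseteq \cdots$ with $|T_j| = j$, with every element of $T_j$ lying in $W_1 \cup \cdots \cup W_\beta$, and with $\calA[T_j] \ne \emptyset$, stopping once $T_j \in \calA$ (which exhibits the desired member). The engine is the elementary shadow bound: if $\calF$ is a $\rho$-spread family in which every member is nonempty, then $\bigl|\bigcup_{F \in \calF} F\bigr| \ge \rho$, because each element lies in at most $\rho^{-1}|\calF|$ members while $\sum_{F \in \calF} |F| \ge |\calF|$. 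Hence, provided the residual family $\calA(T_j)$ is still $\rho$-spread and $T_j \notin \calA$, the set $\{x : \calA[T_j \cup \{x\}] \ne \emptyset\}$ of admissible next elements has size at least $\rho$, so a single $\delta$-random set misses it with probability at most $2/L$, and therefore the union of the $\beta$ independent copies $W_1, \dots, W_\beta$ contains an admissible element with probability at least $1 - (2/L)^\beta$. Since the chain has at most $k$ steps, a union bound gives that it reaches a member of $\calA$ with probability at least $1 - k(2/L)^\beta$.

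The substantive difficulty — and the step I expect to be the main obstacle — is that conditioning an $r$-spread family on containing $T_j$ need not keep it spread, so the chain must be chosen carefully: at each step I would choose the new element of $T_{j+1}$ among the admissible elements in $W_1 \cup \cdots \cup W_\beta$ so as to keep $\calA(T_{j+1})$ at least $\rho$-spread, using an extremal, weight-maximizing selection of the sort used to localize spread families. The parameter $\rho \approx \ln(\log_2(r\delta))/\delta$ is exactly the threshold below which the shadow bound would degrade to a useless per-copy error of order $\delta$, so spreadness must be maintained down to precisely that level — which is the source of the slack $2/\log_2(r\delta)$, rather than something exponentially small in $r\delta$. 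Finally, making the informal ``reveal one element at a time'' reasoning of the previous paragraph fully rigorous, and producing exactly the factor $k$ rather than a binomial-tail expression, is cleanest through an encoding argument: a run of the process that failed to produce a member of $\calA$ would let one describe $(W_1, \dots, W_\beta)$ using about $\beta \log_2\bigl(\tfrac12\log_2(r\delta)\bigr)$ fewer bits than its near-uniform entropy, with an additive overhead of $\log_2 k$ for the bookkeeping — a contradiction.
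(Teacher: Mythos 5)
The paper does not prove this lemma; it is quoted verbatim from \cite[Prop.~5]{spread_lemma}, so there is no in-paper proof to compare against. Evaluating your argument on its own terms: the opening reduction to $\beta$ independent $\delta$-random sets and the shadow bound for $\rho$-spread families are both correct, but the central mechanism --- building a chain $T_0\subseteq T_1\subseteq\cdots$ by adding one element at a time while preserving $\rho$-spreadness of $\calA(T_j)$ --- has a real gap, and you correctly flag it yourself. The shadow bound only guarantees $\geq\rho$ elements $x$ with $\calA[T_j\cup\{x\}]\neq\emptyset$; it says nothing about whether any such $x$ keeps $\calA(T_j\cup\{x\})$ $\rho$-spread, and in general no such $x$ need exist (restricting a spread family to a single extra element can make it arbitrarily lopsided, e.g., if all sets in $\calA[T_j\cup\{x\}]$ share a further common element). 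Your proposed fix --- an ``extremal, weight-maximizing selection'' --- is the localization trick of \Cref{lem:spread_family}, but that trick lets you choose an arbitrary shift set $S$; it does not let you choose $S$ to be $T_j\cup\{x\}$ for an $x$ forced to lie in $W$, which is exactly what the chain needs.

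A second, independent problem is the probability accounting. At step $j$ the admissible set depends on $T_j$, which you chose adaptively from $W_1\cup\cdots\cup W_\beta$, so the $k$ events ``all of $W_1,\dots,W_\beta$ miss the step-$j$ admissible set'' are neither independent nor each bounded by $(2/L)^\beta$ after conditioning on the history; the clean union bound you invoke is not available. The standard proofs (Alweiss--Lovett--Wu--Zhang, Rao, Tao, Frankston--Kahn--Narayanan--Park) structure the argument round-by-round precisely to avoid both issues: they reveal $W_i$ only in round $i$, never require the residual family to stay spread, and track a scalar residual quantity (e.g.\ $\min_{F\in\calA}|F\setminus(W_1\cup\cdots\cup W_i)|$ or a weighted analogue) that decays by a factor $\approx 2/\log_2(r\delta)$ in expectation each round; the factor $k$ then arises from Markov's inequality applied to this residual, which starts at $\leq k$, not from a union bound over $k$ chain steps. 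Your closing remark that the encoding argument is the cleanest route is accurate --- that is Rao's proof --- but it is a different argument that you do not develop, not a way to repair the chain.
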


We record the following useful observation 
that will allow us to find spread sets.
\begin{lemma}[Finding Spread Family]
\label{lem:spread_family}
Let $\calU\subseteq 2^{[N]}$ be an $r$-spread family,
and let $\calA\subseteq \calU$ be subfamily.
Let $r'<r$.
Then, there exists $S\subseteq [N]$ 
such that 
\begin{itemize}
\item $\calA(S)\subseteq 2^{[N]\setminus S}$ is $r'$-spread.
\item $|S| \leq \dfrac{\log|\calU| - \log |\calA|}
{\log r - \log r'}$. 
\end{itemize}
\end{lemma}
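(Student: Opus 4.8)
The plan is to build the set $S$ greedily, one element at a time, maintaining the invariant that the "current" family is a large subfamily of a restriction of $\calU$, until it becomes $r'$-spread. Concretely, start with $S_0 = \varnothing$ and $\calA_0 = \calA$. Given $S_j$ and $\calA_j \subseteq \calU(S_j)$, if $\calA_j$ is already $r'$-spread we stop and output $S = S_j$. Otherwise, by definition of $r'$-spread there is a nonempty set $T \subseteq [N]\setminus S_j$ witnessing the failure, i.e. $|\calA_j[T]| > (r')^{-|T|}|\calA_j|$; we then set $S_{j+1} = S_j \cup T$ and $\calA_{j+1} = \calA_j(T)$, so that $|S_{j+1}| = |S_j| + |T|$ and $|\calA_{j+1}| = |\calA_j[T]| > (r')^{-|T|}|\calA_j|$.

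The key point is to track how $|\calA_j|$ grows relative to the size of $S_j$ and compare it against the ceiling imposed by $\calU$ being $r$-spread. After the process runs for steps producing $S = S_j$ with $|S| = s$, telescoping the multiplicative bounds gives $|\calA_j| > (r')^{-s}|\calA|$ (each step of size $|T|$ multiplies the count by more than $(r')^{-|T|}$, and the exponents $|T|$ sum to $s$). On the other hand, $\calA_j \subseteq \calU(S)$, and since $\calU$ is $r$-spread we have $|\calU(S)| = |\calU[S]| \le r^{-s}|\calU|$, hence $|\calA_j| \le r^{-s}|\calU|$. Combining, $(r')^{-s}|\calA| < r^{-s}|\calU|$, which rearranges to $(r/r')^{s} < |\calU|/|\calA|$, i.e. $s < \frac{\log|\calU| - \log|\calA|}{\log r - \log r'}$. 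This is slightly stronger than the stated bound, and in particular it shows the process must terminate after finitely many steps (the bound on $s$ is finite since $r' < r$), so the output $S$ is well-defined and $\calA(S) = \calA_j$ is $r'$-spread, as required. One should also note the degenerate case: if $\calA = \varnothing$ the statement is vacuous (the empty family is $r'$-spread), and if $\calA \ne \varnothing$ the counts $|\calA_j|$ are always positive so all the inequalities make sense.

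I do not expect a genuine obstacle here; the only thing to be careful about is the bookkeeping of exponents when a witnessing set $T$ has size greater than $1$ (so one cannot simply say "one element at a time"), and making sure the termination argument is phrased correctly — the bound on $s$ is what forces the greedy process to halt, rather than some separate monotonicity argument. It is also worth stating explicitly that $\calA(S)$ lives in $2^{[N]\setminus S}$, which is immediate from the definition of $\calF(X)$ since every element of $\calA(S)$ is of the form $F \setminus S$ with $S \subseteq F \subseteq [N]$.
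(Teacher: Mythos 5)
Your proof is correct and is at heart the same argument as the paper's: both rest on extremizing the quantity $|\calA[S]|\cdot(r')^{|S|}$. The paper simply takes $S$ to be a global maximizer of this product, from which both conclusions follow at once (no $X$ can increase it, giving $r'$-spreadness of $\calA(S)$, and comparing $S$ against $\varnothing$ gives the size bound), whereas your greedy version builds up to a local maximizer and therefore needs the separate termination observation you flagged.
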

\begin{proof}
Take $S$ such that $|\calA[S]| \cdot (r')^{|S|}$ is maximal.
We first check that $\calA(S)$ is $r'$-spread.
To do that, note that for any $X\subseteq [N]\setminus S$,
we have 
$$|\calA(S)[X]| = |\calA(S\cup X)| 
\leq (r')^{-|X|}|\calA(S)|$$
by using maximality between $S$ and $S\cup X$.

To verify the second condition, we note that
\begin{align*}
(r')^{-|S|}|\calA| &\leq |\calA[S]| \tag{maximality between 
$S$ and $\emptyset$} \\
&\leq |\calU[S]| 
\leq r^{-|S|} |\calU|,
\tag{spreadness of $|\calU|$}
\end{align*}
which verifies the second condition.
\end{proof}
\section{Density Boost}
\label{sec:density_boost}
In this section, we prove the density boost lemma.
This is essentially the same as 
\cite[Thm.~12]{permutations_better}.
There, it was proved by first obtaining an initial 
spread approximation.
We provide a more direct proof.
\begin{lemma}[Density Boost]
\label{lem:density_boost}
For any $\eps > 0$, there exists $\delta>0$ 
and $n_0=n_0(\eps)$
such that the following holds:

Let $\calU\subseteq \binom{[N]}n$ be an $(r,t)$-spread family,
and let $\calA\subseteq\calU$ be a $t$-intersecting family
such that $|\calA| \geq 0.001c_t(\calU)$.
Assume that $t > n^{1-\delta}/\delta$, $r>n^{0.99}$, and $n>n_0$.
Then, there exists a set $X\subseteq [N]$ such that 
$|X| = t - \lceil n^{1-\delta}\rceil$
and 
$$|\calA[X]| \geq  e^{-\eps t} |\calA|.$$
\end{lemma}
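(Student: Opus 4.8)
The plan is to build the set $X$ greedily, removing one "popular" element of a tree at a time, and to control how much density we lose at each step using the spreadness of $\calU$ together with the $t$-intersecting property of $\calA$. Fix a tree $T\in\calA$; since $\calA$ is $t$-intersecting, every other $A\in\calA$ satisfies $|A\cap T|\ge t$. The key observation is that if at some stage we have already located a set $Y$ with $|\calA[Y]|$ large, and if $|Y|$ is still noticeably smaller than $t$, then an averaging argument over the edges of $T\setminus Y$ must produce a single edge $f\in T$, $f\notin Y$, lying in a $\tfrac{t-|Y|}{n}$-ish fraction of the trees in $\calA[Y]$ — because each tree of $\calA[Y]$ must meet $T$ in at least $t-|Y|$ edges outside $Y$, while $T$ has only $n-1$ edges. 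Concretely, $|\calA[Y\cup\{f\}]|\ge \frac{t-|Y|}{n-1}\,|\calA[Y]|$ for a well-chosen $f$. Iterating this from $Y=\emptyset$ up to $|Y|=t-\lceil n^{1-\delta}\rceil$ gives
\[
|\calA[X]| \ \ge\ \prod_{j=0}^{t-\lceil n^{1-\delta}\rceil - 1}\frac{t-j}{n-1}\cdot |\calA|,
\]
and the issue is only to check this product is at least $e^{-\eps t}$.

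That product, however, is \emph{not} $\ge e^{-\eps t}$ in general — the factors $\frac{t-j}{n-1}$ are roughly $\frac{t}{n}$, which is a constant less than $1$, so the naive bound decays like $(t/n)^t$, far worse than $e^{-\eps t}$. This is where the spreadness of $\calU$ must be used to counteract the loss, and this is the step I expect to be the main obstacle. The fix is to run the greedy argument not on raw cardinalities but on the ratio $|\calA[Y]|/c_{|Y|}(\calU)$, i.e. to compare $\calA[Y]$ against the largest $\calU$-shadow of the same level. When we pass from $Y$ to $Y\cup\{f\}$, the denominator changes from $c_{|Y|}(\calU)$ to $c_{|Y|+1}(\calU)$, and since $\calU$ is $(r,t)$-spread with $r>n^{0.99}$ we have $c_{|Y|+1}(\calU)\le r^{-1}c_{|Y|}(\calU)\le n^{-0.99}c_{|Y|}(\calU)$ as long as $|Y|+1\le t$. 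So each greedy step multiplies the ratio $|\calA[Y]|/c_{|Y|}(\calU)$ by at least $\frac{t-|Y|}{n-1}\big/ n^{-0.99} \ge n^{0.99}\cdot\frac{n^{1-\delta}}{n-1}\gtrsim n^{0.99-\delta}$, which is a growing factor. Hence the ratio never drops below its initial value $0.001$ (by hypothesis $|\calA|\ge 0.001\,c_t(\calU)$ — actually we start from level $0$, where the ratio is $|\calA|/|\calU|$, and we must instead track that the ratio stays bounded below throughout; the point is it is \emph{monotone increasing}, not decreasing), so in particular at the final level $|X|=t-\lceil n^{1-\delta}\rceil$ we get $|\calA[X]|\ge c_{|X|}(\calU)\cdot(\text{something not too small})$.

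It remains to convert this lower bound in terms of $c_{|X|}(\calU)$ back into a lower bound in terms of $|\calA|$, and this is where $e^{-\eps t}$ finally enters. We have $|\calA|\le c_t(\calU)$ trivially (in fact $|\calA|\le |\calU[Z]|\le c_t(\calU)$ for the relevant $t$-set), and $c_t(\calU)\le r^{-(t-|X|)}c_{|X|}(\calU)$ by $(r,t)$-spreadness, with $t-|X| = \lceil n^{1-\delta}\rceil$. Therefore
\[
|\calA[X]|\ \gtrsim\ c_{|X|}(\calU)\ \ge\ r^{\,t-|X|}\,c_t(\calU)\ \ge\ r^{\,n^{1-\delta}}\,|\calA|,
\]
which is not just $\ge e^{-\eps t}|\calA|$ but vastly larger — so the real content is a careful bookkeeping version of this, where one must be honest about the $0.001$ factor and about the fact that the greedy steps may occasionally lose rather than gain if $|Y|$ gets close to $t$; the constraint $|X| = t-\lceil n^{1-\delta}\rceil$ is precisely chosen so that every greedy step is taken while $t-|Y|\ge n^{1-\delta}$, guaranteeing the per-step factor $\frac{t-|Y|}{n-1}\cdot r \ge \frac{n^{1-\delta}}{n-1}\cdot n^{0.99}\ge 1$ for $n>n_0(\eps)$ with $\delta$ chosen small (e.g. $\delta<0.49$). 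Choosing $\delta=\delta(\eps)$ small enough that the accumulated logarithmic errors are at most $\eps t$ then closes the argument; since the honest bound is exponentially stronger than $e^{-\eps t}$, essentially any sufficiently small $\delta$ works, and $n_0(\eps)$ only needs to absorb the transition between the $\gtrsim$ and $\ge$ in the per-step estimate.
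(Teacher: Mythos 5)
Your greedy step $|\calA[Y\cup\{f\}]|\ge\tfrac{t-|Y|}{n-1}|\calA[Y]|$ is fine, and it is true that the ratio $\rho(Y)=|\calA[Y]|/c_{|Y|}(\calU)$ is nondecreasing along the greedy path while $|Y|<t$, since each step multiplies it by $\tfrac{(t-|Y|)\,r}{n-1}\ge n^{0.99-\delta}>1$. But the starting value of this ratio is $\rho(\emptyset)=|\calA|/|\calU|$, \emph{not} $0.001$; the hypothesis $|\calA|\ge 0.001\,c_t(\calU)$ only gives $\rho(\emptyset)\ge 0.001\,d_t(\calU)$, and $d_t(\calU)$ is astronomically small (for $\calU=\calT_n$ it equals $(2/n)^t$). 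Monotonicity of $\rho$ therefore yields $|\calA[X]|\ge d_{t'}(\calU)\,|\calA|$, and $d_{t'}(\calU)\approx n^{-t'}$ is far below $e^{-\eps t}$. Accumulating the per-step gain honestly does not save you: the bound is $|\calA[X]|\ge n^{-t'}\prod_{j<t'}\tfrac{(t-j)r}{n-1}\,|\calA|\ge n^{-(0.01+\delta)t'}|\calA|$, and for this to exceed $e^{-\eps t}|\calA|$ one would need $(0.01+\delta)\log n\lesssim\eps$, which fails for every fixed $\eps$ once $n$ is large, no matter how small $\delta$ is. Two further red flags: the assertion $|\calA|\le c_t(\calU)$ is unjustified (for trees it reads $|\calA|\le 2^tn^{n-t-2}$, which is essentially the theorem being proved), and your final chain concludes $|\calA[X]|\gtrsim r^{n^{1-\delta}}|\calA|\gg|\calA|$, impossible since $\calA[X]\subseteq\calA$ — that alone should have signalled an error.

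The paper's argument is different in two ways that close exactly this gap. First, instead of the ratio $d_{|S|}(\calU)^{-1}|\calA[S]|$ (which your $\rho$ amounts to, and which explodes as $|S|$ grows), it picks $S$ maximizing the \emph{dampened} quantity $d_{|S|}(\calU)^{-2\delta}|\calA[S]|$ with a small exponent $2\delta$; this dampening is what allows one to prove the structural bound $|S|\le(1+6\delta)t$ from spreadness. Second, the argument is by contradiction rather than greedy construction: assuming no $X$ of size $t'$ with $|\calA[X]|\ge e^{-\eps t}|\calA|$ exists, a double count of pairs $(A,T)$ with $T\in\binom{A\cap S}{t'}$ produces some $F\in\calA$ with $|F\cap S|<t'$, and the $t$-intersecting property then forces every member of $\calA[S]$ to meet $F\setminus S$ in at least $n^{1-\delta}$ elements, which contradicts the maximality of $S$ via another spreadness estimate. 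It is this interplay between the dampened maximizer, double counting, and the $t$-intersection hypothesis — none of which your greedy scheme exploits — that produces the $e^{-\eps t}$ rather than the $n^{-\Theta(t)}$ your approach gives.
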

\begin{proof}
Select $\delta\in \left(0,\tfrac 16\right)$ such that $12\delta(1-\log\delta) < \eps$.
We select $S$ such that $d_{|S|}(\calU)^{-2\delta}
\cdot |\calA[S]|$ is maximal.
We first show that $|S| \leq (1+6\delta)t$.
To do this, assume for a contradiction that $|S|>(1+6\delta)t$.
Then, observe that 
\begin{align*}
0.001 d_t(\calU)\, |\calU| < |\calA| 
&\leq d_{|S|}(\calU)^{-2\delta}  |\calA[S]|
\tag{maximality condition for $S$ and $\emptyset$} \\
&\leq d_{|S|}(\calU)^{-2\delta}  |\calU[S]| \\
&\leq d_{|S|}(\calU)^{1-2\delta}  |\calU| 
\tag{definition \eqref{eq:d_i} of $d_i(\calU)$} \\
&\leq r^{-(1-2\delta)(|S|-t)} d_t(\calU)^{1-2\delta} 
|\calU| \tag{$(r,t)$-spreadness of $\calU$} \\
&< r^{-6\delta(1-2\delta)t} d_t(\calU)^{1-2\delta} |\calU|,
\tag{assumption that $|S|> (1+6\delta)t$}
\end{align*}
which implies that 
$$0.001 d_t(\calU)^{2\delta} \leq r^{-6\delta(1-2\delta)t}
\implies 0.001 d_t(\calU) \leq r^{-3(1-2\delta)t},
$$
By a straightforward averaging argument, 
$d_t(\calU) \geq n^{-t} \geq r^{-2t}$,
which is a contradiction.

Now, let $t'=t-\lceil n^{1-\delta}\rceil$,
and assume for contradiction that such $X$ does not exist.
We claim that there exists an element 
$F\in\calA$ such that $|F\cap S| < t'$.
To prove this, assume for contradiction that 
$|F\cap S| \geq t'$ for all $F\in\calA$.
We count in two ways 
the number
$$N = \left|\left\{(A,T) : A\in\calA, T\in 
\tbinom{A\cap S}{t'} \right\}\right|.$$
On one hand, we get that $N\geq |\calA|$.
On the other hand, we get that 
\begin{align*}
N\leq \sum_{X\in \binom{S}{t'}} |\calA[X]|
&\leq \binom{|S|}{t'} \cdot |\calA|\cdot e^{-\eps t} \\
&< \binom{(1+6\delta)t}{12\delta t} \cdot |\calA|\cdot e^{-\eps t} \\
&\leq \left(\frac{e(1+6\delta) t}{12\delta t}\right)^{12\delta t} 
\cdot |\calA|\cdot e^{-\eps t} 
\tag{\Cref{lem:binom_bound}}\\
&\leq \left(\frac{e}{\delta}\right)^{12\delta t} |\calA| \cdot e^{-\eps t} 
< |\calA|,
\end{align*}
from our choice of $\delta$.
This gives a contradiction.
Thus, $|F\cap S|<t'$ for some $F\in \calA$.

Observe that any 
$A\in\calA[S]$ must intersect $F$ at $t-t'\geq n^{1-\delta}$ elements 
outside $S$,
i.e., $T$ contains a subset $U\in \binom{F\setminus S}
{t-t'}$.
Summing across $t$ gives
\begin{align*}
|\calA[S]| &\leq \sum_{T\in\binom{F\setminus S}{t-t'}} 
|\calA[S\cup T]| \\
&\leq \binom{n-|S|}{n^{1-\delta}} 
\left(\frac{c_t(\calU)}{c_{t'}(\calU)}\right)^{2\delta} 
|\calA[S]| 
\tag{maximality between $S$ and $S\cup T$} \\
&\leq \binom{n}{n^{1-\delta}}
r^{-2\delta n^{1-\delta}} |\calA[S]| 
\tag{$(r,t)$-spreadness of $\calU$}\\
&\leq (en^{\delta})^{n^{1-\delta}}
r^{-2\delta n^{1-\delta}} |\calA[S]| 
\tag{\Cref{lem:binom_bound}}\\
&< |\calA[S]| \tag{$r>n^{0.99}$ and $n$ is sufficiently large}
\end{align*}
which is a contradiction.
\end{proof}
\section{Spread Approximation}
\label{sec:spread_approximation}
In this section, we use the density boost lemma 
(\Cref{lem:density_boost}) 
to obtain a spread approximation, 
which is an approximation of $\calA$ 
with a family of sets $\calS$ with lower uniformity.
\begin{lemma}
\label{lem:find_spread_set}
Assume the same setup as \Cref{lem:density_boost}
and take $\delta$ as in that lemma.
Then, there exists $n_0=n_0(\eps)$ such that 
whenever $n>n_0$, there exists $Y\subseteq [N]$
such that $|Y|\leq (1+\eps)t$ and $\calA(Y)$ is $(r/2)$-spread.
\end{lemma}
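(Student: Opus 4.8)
The plan is to apply the density boost lemma (\Cref{lem:density_boost}) once — and, when $t$ is close to the threshold $n^{1-\delta}/\delta$, a second time — to obtain a core set $X$ with $|X|$ just below $t$ and $|\calA[X]|$ still a non‑negligible fraction of $c_t(\calU)$, and then to enlarge $X$ to $Y:=X\cup S$ by the maximization argument behind \Cref{lem:spread_family}.

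First I apply \Cref{lem:density_boost} to $\calA\subseteq\calU$. The $\delta$ furnished by that lemma may be taken as small as we wish (in terms of $\eps$), and I take it small enough that \Cref{lem:density_boost} produces $X_1$ with $|X_1|=t-m_1$, where $m_1:=\lceil n^{1-\delta}\rceil$, and $|\calA[X_1]|\ge e^{-\eps t/30}|\calA|\ge 0.001\,e^{-\eps t/30}\,c_t(\calU)$; the small exponent is essential, because the multiplicative loss $e^{-\eps_0 t}$ re‑enters the count as an additive $\eps_0 t\log_2 e$ and $\log_2 e>1$. If $m_1\le\tfrac{\eps t}{3\log_2 r}$, set $X:=X_1$ and $L:=\eps t/30$. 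Otherwise — so that $m_1>\tfrac{\eps t}{3\log_2 r}$ — apply \Cref{lem:density_boost} a second time, to $\calA(X_1)\subseteq\calU(X_1)$, with the role of ``$t$'' played by $m_1$: here $\calU(X_1)$ is $(r,m_1)$-spread because $X_1\cup T$ has size $t$ for every $m_1$-set $T$ and $\calU$ is $(r,t)$-spread; and although the density hypothesis has degraded to $|\calA(X_1)|\ge 0.001\,e^{-\eps t/30}\,c_{m_1}(\calU(X_1))$ (using $c_{m_1}(\calU(X_1))\le c_{|X_1|+m_1}(\calU)=c_t(\calU)$), this still suffices for the proof of \Cref{lem:density_boost}, since $m_1>\tfrac{\eps t}{3\log_2 r}$ makes $\eps t/30$ far smaller than $\delta' m_1\log n$. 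Choosing $\delta'$ a fixed value just below $\tfrac16$ gives $X_2$ with $|X_2|=m_1-m_2$, $m_2=\lceil(n-|X_1|)^{1-\delta'}\rceil=n^{1-\delta'+o(1)}$, and $|\calA(X_1)[X_2]|\ge e^{-Cm_1}|\calA(X_1)|$ for an absolute constant $C$; set $X:=X_1\cup X_2$ and $L:=\eps t/30+Cm_1$. In both cases $|X|=t-m$ with $m\le\tfrac{\eps t}{3\log_2 r}$ and $|\calA[X]|\ge 0.001\,e^{-L}c_t(\calU)$, where $L\le\eps t/14$ for $n$ large (using $m_1\le\delta t+1$ and $\delta$ small).

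Next, choose $S\subseteq[N]\setminus X$ maximizing $|\calA(X)[S]|\,(r/2)^{|S|}$ and put $Y:=X\cup S$. As in the proof of \Cref{lem:spread_family}, $\calA(Y)=\calA(X)(S)$ is $(r/2)$-spread, and comparison with $S=\emptyset$ gives $|\calA[X\cup S]|\ge(r/2)^{-|S|}|\calA[X]|$. If $|S|\le m$ then $|Y|\le(t-m)+m=t$ and we are done. If $|S|>m$, then $|X\cup S|=t-m+|S|>t$, so the observation following the definition of $(r,t)$-spreadness (applied with $j=t$) gives
$$|\calA[X\cup S]|\le|\calU[X\cup S]|\le c_{t-m+|S|}(\calU)\le r^{-(|S|-m)}c_t(\calU).$$
Comparing this with $|\calA[X\cup S]|\ge(r/2)^{-|S|}\cdot 0.001\,e^{-L}c_t(\calU)$ and cancelling $r^{-|S|}c_t(\calU)$ yields $2^{|S|}\le 1000\,e^{L}r^{m}$, hence $|S|\le\log_2 1000+L\log_2 e+m\log_2 r$. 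Therefore
$$|Y|=t-m+|S|\le t+\log_2 1000+L\log_2 e+m\log_2 r\le t+\log_2 1000+\tfrac{\eps t}{14}\log_2 e+\tfrac{\eps t}{3},$$
which is at most $(1+\eps)t$ for all $n>n_0(\eps)$, since $\tfrac{\log_2 e}{14}+\tfrac13<1$ and $\log_2 1000=o(\eps t)$.

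The calibration of these constants is the real content of the proof and the step most likely to cause trouble. Since passing from $r$-spread to $(r/2)$-spread gains only one bit per coordinate, the density‑boost loss $e^{-\eps_0 t}$ costs $\eps_0 t\log_2 e$ in $|Y|$, so the first application must be run with an exponent a definite constant factor below $\eps$ — possible only because $\delta$ is at our disposal. At the same time, one application alone leaves residual $m=n^{1-\delta}$, which is too large compared with $\tfrac{\eps t}{3\log_2 r}$ exactly when $t$ is near $n^{1-\delta}/\delta$; hence the second application, which — being run with a fixed exponent, so losing only $e^{-O(m_1)}$ — cuts the residual to $n^{1-\delta'+o(1)}$, and this forces the verification that \Cref{lem:density_boost}'s density hypothesis survives the first loss. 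Once the exponents are chosen so that $L\log_2 e+m\log_2 r<\eps t$, everything else is bookkeeping.
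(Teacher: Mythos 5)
Your proposal is not a reproduction of the paper's argument but a substantial reworking of it, and you correctly identify two real defects in the paper's own proof. First, the paper's final bound $|Y|\le t-n^{1-\delta}+\frac{\eps t+O(1)}{\log 2}$ does not give $|Y|\le(1+\eps)t$ once $t$ is much larger than $n^{1-\delta}$, since $1/\log 2>1$; this is exactly the ``$\eps_0 t\log_2 e$'' issue you flag, and it forces the density boost to be run with an exponent a definite factor below $\eps$, as you do. Second, the paper applies \Cref{lem:spread_family} to $\calA(X)\subseteq\calU(X)$, which requires $\calU(X)$ to be $r$-spread; but $(r,t)$-spreadness of $\calU$ does not imply this when $|X|<t$, and the correct substitute (bounding $|\calU[X\cup S]|$ via $c_{|X|+|S|}(\calU)\le r^{-(|X|+|S|-t)}c_t(\calU)$, as you do) introduces the extra $m\log_2 r$ term. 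Your two-stage density boost to shrink $m$ is a genuine and appropriate fix for that. So your approach is more careful than, and genuinely different from, the paper's; in fact it proves a statement that the paper's own derivation does not.

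That said, the specific constants you chose do not close the argument at the step where you rerun \Cref{lem:density_boost} with the degraded density $0.001e^{-\eps t/30}$. In that proof the contradiction step requires
$0.001e^{-\eps t/C_1}>r^{-2\delta'(1-6\delta')m_1}$, i.e.\ $2\delta'(1-6\delta')m_1\ln r>\eps t/C_1+O(1)$; combined with your case assumption $m_1>\frac{\eps t}{C_2\log_2 r}$ (so $m_1\ln r>\frac{\eps t\ln 2}{C_2}$), this needs $C_1/C_2>\frac{1}{2\delta'(1-6\delta')\ln 2}$. The right-hand side is minimized at $\delta'=1/12$, where it equals roughly $17.3$; your choice $C_1=30$, $C_2=3$ gives only a ratio of $10$, and your choice of $\delta'$ ``just below $\frac16$'' sends $(1-6\delta')\to 0$, making the required ratio arbitrarily large. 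Your stated justification --- that $\eps t/30$ is ``far smaller than $\delta'm_1\log n$'' --- misses the $(1-6\delta')$ factor that actually governs this step. Choosing, say, $C_1=60$, $C_2=3$, $\delta'=1/12$ repairs this (and the concluding arithmetic $\frac{\log_2 e}{60}+\frac13<1$ still holds), but as written the constants fail the check.
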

\begin{proof}
Take $X$ as in \Cref{lem:density_boost}.
Let $t' = |X| = t-\lceil n^{1-\delta}\rceil$.
By \Cref{lem:spread_family} on $\calA(X)\subseteq \calU(X)$,
we get that there exists $Y$ such that 
$\calA(Y)$ is $(r/2)$-spread and 
\begin{align*}
|Y| &\leq |X| + \frac{\log\calU(X) - \log\calA(X)}{
    \log r - \log(r/2) } \\
    &\leq t - n^{1-\delta}
    + \frac{\log c_{t'}(\calU)
    -\log(0.001 e^{-\eps t} c_{t'}(\calU))}{
        \log 2} \\
    &= t -  n^{1-\delta} + 
    \frac{\eps t + O(1)}{\log 2},
\end{align*}
which is less than $(1+\eps)t$ if $n$ is very large.
\end{proof}
\begin{lemma}
\label{lem:strong_spread_approx}
Let $\eps>0$ and take $\delta$ as in \Cref{lem:density_boost}.
Let $r>\max(20\eps t,\sqrt n)$, $t > \tfrac 1{\delta} n^{1-\delta}$,
and $n>n_0(\eps)$.
Let $\calU\subseteq \binom{[N]}n$ be an $(r,t)$-spread family,
and let $\calA\subseteq\calU$ be a $t$-intersecting family
such that $|\calA| \geq 0.001 c_t(\calU)$.
Then, there exists family $\calS \subseteq 
\binom{[N]}{\leq (1+\eps)t}$ and $\calA'\subseteq A$ such that 
\begin{enumerate}[label=(\alph*)]
\item $\calA\setminus \calA'\subseteq \calU[\calS]$.
\item $|\calA'| \leq 0.001 c_t(\calU)$.
\item $\calS$ is $t$-intersecting.
\end{enumerate}
\end{lemma}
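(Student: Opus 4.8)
The plan is to build $\calS$ greedily by repeatedly applying \Cref{lem:find_spread_set} to residual subfamilies of $\calA$, each time either extracting a new ``spread set'' $Y$ that captures a large chunk of the remaining family or else stopping once the leftover is small enough to become $\calA'$. Concretely, I would maintain a partially built $\calS$ (initially empty) and the residual family $\calA_i := \calA \setminus \calU[\calS]$. As long as $|\calA_i| \geq 0.001\, c_t(\calU)$, the family $\calA_i$ together with $\calU$ still satisfies the hypotheses of \Cref{lem:find_spread_set} (it is $t$-intersecting as a subfamily of $\calA$, it sits inside the $(r,t)$-spread family $\calU$, and the density condition is exactly the one required), so we obtain a set $Y_i$ with $|Y_i| \leq (1+\eps)t$ such that $\calA_i(Y_i)$ is $(r/2)$-spread. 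The key point is that $(r/2)$-spreadness of $\calA_i(Y_i)$, combined with $r/2 > 10\eps t \geq$ (something comfortably bigger than $1$), forces $Y_i$ to be contained in a positive fraction — in fact a fraction bounded below by an absolute constant, or at least by $1/\mathrm{poly}(n)$ — of the members of $\calA_i$ that contain $Y_i$; more usefully, $|\calA_i[Y_i]|$ is a substantial portion of $c_{|Y_i|}(\calU)$ and hence the removal $\calA_{i+1} = \calA_i \setminus \calU[Y_i]$ strictly and quantitatively shrinks the family. I then add $Y_i$ to $\calS$ and iterate.

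The termination and size bookkeeping go as follows. Each step removes at least $|\calA_i[Y_i]|$ elements, and since $\calA_i(Y_i)$ is $(r/2)$-spread and nonempty it has at least one element, so $|\calA_i[Y_i]| \geq 1$; to get real progress I would instead argue that $|\calA_i[Y_i]| \geq c_{|Y_i|}(\calU) / \mathrm{poly}(n)$ is false in general but that the process must terminate because $|\calA_0| \leq c_0(\calU) = |\calU| \leq N^n$ and each $\calU[Y_i]$ that we subtract is disjoint — no, wait — the sets $\calU[Y_i]$ need not be disjoint, so the cleanest accounting is: the process terminates after finitely many steps because $\calA_{i+1} \subsetneq \calA_i$ strictly (as $\calA_i[Y_i]$ is nonempty and entirely removed), and when it stops we set $\calA' := \calA_{\mathrm{last}}$, which by the stopping rule satisfies $|\calA'| < 0.001\, c_t(\calU)$, giving (b). By construction $\calA \setminus \calA' = \bigcup_i (\calA_i \setminus \calA_{i+1}) \subseteq \bigcup_i \calU[Y_i] = \calU[\calS]$, which is (a). Each $Y_i$ has size at most $(1+\eps)t$, so $\calS \subseteq \binom{[N]}{\leq (1+\eps)t}$.

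For part (c), the $t$-intersecting property of $\calS$, I would argue as follows: each $Y_i$ arises (via \Cref{lem:find_spread_set} and the density-boost set $X_i$) from a genuine element structure — there is some $A \in \calA_i$ with $Y_i \subseteq A$, in fact $|\calA_i[Y_i]|$ is large, so pick $A_i, A_j \in \calA$ with $Y_i \subseteq A_i$ and $Y_j \subseteq A_j$; since $\calA$ is $t$-intersecting, $|A_i \cap A_j| \geq t$. This alone does not give $|Y_i \cap Y_j| \geq t$, so one needs the stronger structural fact that $Y_i$ (and $Y_j$) is, up to a small number of elements, forced: any $A \in \calA_i[Y_i]$ must meet any fixed $B \in \calA_i \setminus \calU[Y_i]$... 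The cleanest route, which I expect is the one intended, is to observe that because $\calA_i[Y_i]$ is large — of size at least $0.001\, c_t(\calU) / (\text{number of steps})$, no — one shows directly that if $|Y_i \cap Y_j| < t$ then picking $A \in \calA_i[Y_i]$ and noting it must $t$-intersect every element of $\calA_j[Y_j] \subseteq \calA$, the $t - |Y_i \cap Y_j| \geq 1$ ``extra'' intersection points of $A$ outside $Y_i \cap Y_j$ must land in a set of size $\leq n$, so $A$ lies in $\calU[Y_i \cup U]$ for some small $U$; since $\calA_i(Y_i)$ is $(r/2)$-spread with $r/2 > 10\eps t$, the fraction of $\calA_i[Y_i]$ captured this way is at most $\binom{n}{t - |Y_i \cap Y_j|}(r/2)^{-(t - |Y_i \cap Y_j|)} < 1$ when $r > 2n$ — forcing $|Y_i \cap Y_j| \geq t$. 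I expect this last step — extracting the $t$-intersecting property of $\calS$ from the spreadness of the residual families — to be the main obstacle, and it is exactly where the hypothesis $r > \max(20\eps t, \sqrt n)$ is used; the earlier construction and counting are routine iterations of the two preceding lemmas.
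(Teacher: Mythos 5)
Your construction of $\calS$ by iterating \Cref{lem:find_spread_set}, and the verification of parts (a) and (b), are exactly the paper's approach and are fine (the process terminates because the residual family strictly shrinks, and the stopping rule gives (b) immediately).

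The gap is in part (c), and it is substantive. Two problems:

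First, fixing a single $B\in\calA_j[Y_j]$ and saying ``$A$ must have $t-|Y_i\cap Y_j|$ extra intersection points outside $Y_i$'' is not correct. You have $|A\cap B| = |Y_i\cap B| + |(A\setminus Y_i)\cap B|$, and the required extra intersection is $t-|Y_i\cap B|$, not $t-|Y_i\cap Y_j|$. If $B$ happens to contain most of $Y_i$ (which is allowed since $|Y_i|$ can be as large as $(1+\eps)t>t$), then $|Y_i\cap B|\geq t$ is possible even when $|Y_i\cap Y_j|<t$, and the $t$-intersection constraint on $A$ is vacuous. To get anything you must first show that some $B$ has small $|Y_i\cap (B\setminus Y_j)|$, which requires a symmetric use of the spreadness of $\calA_j(Y_j)$ — this is exactly why the paper restricts to the subfamilies $\calB_i$, $\calB_j$ that control intersection with the \emph{other} set $Y_j$, $Y_i$ respectively.

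Second, even after this fix, your proposed bound $\binom{n}{x}(r/2)^{-x}<1$ does not hold under the stated hypotheses. You correctly note it needs $r>2n$, but the lemma only assumes $r>\sqrt n$, and in the actual application to spanning trees one takes $r=n/2<n$, so a naive union bound over one-edge extensions already exceeds $1$. The paper avoids this entirely: after restricting to $\calB_i$ and $\calB_j$ (which remain $(r/6)$-spread), it applies the Spread Lemma (\Cref{lem:spread_lemma}) to a random $2$-coloring of $[N]$ to produce $F_i'\in\calB_i$ and $F_j'\in\calB_j$ that are \emph{literally disjoint}, and then bounds $|(Y_i\cup F_i')\cap (Y_j\cup F_j')|<t$ directly. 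The probabilistic existence statement from the Spread Lemma is much stronger than anything the union bound can give at $r=n/2$, and this is the missing ingredient in your argument. Without it, part (c) does not close.
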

\begin{proof}
We iteratively apply \Cref{lem:find_spread_set}.
Initialize $\calA_1=\calA$.
Then, for each $i$, find the set $Y_i$
such that $|Y_i|\leq (1+\eps)t$ and 
$\calA_i(Y_i)$ is $(r/2)$-spread.
Then, let $\calA_{i+1} = \calA_i\setminus\calA_i(Y_i)$.
The process stops at the $s$-step, where 
$|\calA_{s+1}| \leq 0.001c_t(\calU)$.
Let $\calS = \{Y_1,Y_2,\dots,Y_s\}$,
and we let $\calA' = \calA_{s+1}$.

Clearly, the (a) and (b) are satisfied.
We are now left with (c).
Assume for contradiction that $|Y_i\cap Y_j|\leq t-1$
for some $i<j$. Let $x=t-|Y_i\cap Y_j|$, and define 
\begin{align*}
\calB_i &= \{X\in\calA_i(Y_i) : |X\cap Y_j| \leq  
\lfloor x/2\rfloor\}  \\
\calB_j &=  \{X\in\calA_j(Y_j) : |X\cap Y_i| \leq \lfloor x/2\rfloor\}  
\end{align*}
We claim that $|\calB_i| \geq \tfrac 13|\calA_i(Y_i)|$.
To see this, note that 
\begin{align*}
|\calA_i(Y_i)| - |\calB_i|
&\leq \sum_{T\in \binom{Y_j\setminus Y_i}{\lceil x/2\rceil}} 
\calA_i(Y_i\cup T) \\
&\leq \binom{|Y_j\setminus Y_i|}{\lceil x/2\rceil} 
\left(\frac r2\right)^{-\lceil x/2\rceil}
|\calA_i(Y_i)| 
\\
&=  
\left(\frac{2e(\eps t + x)}x\right)^{\lceil x/2\rceil}
\left(\frac r2\right)^{-\lceil x/2\rceil} |\calA_i(Y_i)|
\tag{\Cref{lem:binom_bound}} 
\\
&\leq (10\eps t + 2e)^{\lceil x/2\rceil} 
\left(\frac r2\right)^{-\lceil x/2\rceil}
|\calA_i(Y_i)| \\
&\leq \tfrac 23 |\calA_i(Y_i)|.
\end{align*}
Thus, $\calB_i$ is $(r/6)$-spread.
Similarly, $\calB_j$ is $(r/6)$-spread.

Now, we apply spread lemma (\Cref{lem:spread_lemma}).
Randomly color elements in $[N]$ with $2$ colors,
where each number is independently assigned a color with 
probability $\tfrac 12$.
Let $U_i$ and $U_j$ be the set of numbers 
with each color. By \Cref{lem:spread_lemma}
(with $\delta = \frac {12}r$ and 
$\beta = \frac r{24}$), we have 
\begin{align*}
\mathbb P(\text{there exists }F_i'\in\calB_i 
\text{ such that } F_i'\subseteq U_i)
&\geq 1- 2^{-r/24} n \\
\mathbb P(\text{there exists }F_j'\in\calB_j 
\text{ such that } F_j'\subseteq U_j)
&\geq 1- 2^{-r/24} n.
\end{align*}
We can make $n$ large enough so that $r/24 > n^{0.99}/24 > \log_2(2n)$,
so $2^{-r/24}n < \tfrac 12$.
Thus, by union bound, there exists $F_i'\in\calB_i$
and $F_j'\in\calB_j$ such that $F_i'\cap F_j'=\emptyset$.
We have $F_i := F_i'\cup Y_i\in \calA$
and $F_j := F_j'\cup Y_j\in \calA$.
Then, note that
\begin{align*}
|F_i\cap F_j| &\leq  
|Y_i\cap F_j| + |F_i'\cap F_j| \\ 
&\leq |Y_i\cap Y_j| + |Y_i\cap F_j'|
+ |F_i'\cap Y_j| \\
&\leq |Y_i\cap Y_j|  + \tfrac x2 + \tfrac x2 < t,
\end{align*}
which violates the $t$-intersecting property of $\calA$.
\end{proof}

Once we obtained the spread approximation $\calS$,
we use a result due to \cite{partitions}
to analyze the structure of $\calA[\calS]$.
Before stating the theorem, we make the following definition:
a $t$-intersecting family $\calS$ is \vocab{trivial}
if every element in $\calS$ contain a fixed subset 
$F$ of size $t$.
\begin{theorem}[{\cite[Thm.~14]{partitions}}]
\label{thm:structure}
Let $\eps\in (0,1)$ and $N,r,q,t\geq 1$ be integers 
such that $\eps r \geq 24q$.
Let $\calA\subseteq 2^{[N]}$ be an $(r,t)$-spread family,
and let $\calS\subseteq \binom{[N]}{\leq q}$
be a nontrivial $t$-intersecting family.
Then, there exists a $t$-element set $T$ such that 
$|\calA[\calS]| \leq \eps |\calA[T]|$.
\end{theorem}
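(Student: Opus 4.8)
The plan is to show that one may take $T$ to be a $t$-element set maximizing $|\calA[T]|$; writing $M := \max_{|T| = t}|\calA[T]|$, it then suffices to prove $|\calA[\calS]| \le \eps M$. I would begin with two reductions. Since $\calS$ is $t$-intersecting and nontrivial, every $S \in \calS$ has $|S| \ge t+1$, because an element of size exactly $t$ would, by the $t$-intersecting property, be contained in every other member of $\calS$ and hence make $\calS$ trivial. Consequently, for any $S \in \calS$ and any $t$-subset $T_0 \subseteq S$, the $(r,t)$-spreadness of $\calA$ gives $|\calA[S]| \le r^{-(|S|-t)}|\calA[T_0]| \le M/r$; in particular, if $|\calS| \le \eps r$ we are already done by the union bound $|\calA[\calS]| \le \sum_{S \in \calS}|\calA[S]| \le |\calS|\cdot M/r \le \eps M$, so the substance of the argument lies in the regime $|\calS| > \eps r \ge 24 q$.

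Next I would prove a local estimate for each link of $\calS$. Fix a $t$-set $T$; nontriviality supplies some $S'_T \in \calS$ with $T \not\subseteq S'_T$, and since $|S \cap S'_T| \ge t > |T \cap S'_T|$ for every $S \in \calS[T]$, each such $S$ must meet $S'_T \setminus T$. Therefore $\calA[\calS[T]] \subseteq \bigcup_{x \in S'_T \setminus T}\calA[T \cup \{x\}]$, whence by spreadness $|\calA[\calS[T]]| \le |S'_T \setminus T|\cdot r^{-1}|\calA[T]| \le (q/r)|\calA[T]| \le (\eps/24)|\calA[T]|$. More generally, any set $U$ with $|U| \ge t+1$ satisfies $|\calA[\calS[U]]| \le |\calA[U]| \le r^{-(|U|-t)}M$ by the same spreadness.

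The crux is to assemble these local bounds into the global inequality $|\calA[\calS]| \le \eps M$. The naive route --- fix $S_0 \in \calS$, observe $\calA[\calS] \subseteq \bigcup_{T \in \binom{S_0}{t}}\calA[\calS[T]]$, and sum --- loses a factor $\binom{q}{t}$, since a single element of $\calS$ may contain many $t$-subsets of $S_0$ and these pieces overlap heavily. To avoid this I would pass to a genuine kernel of $\calS$ via the delta-system (sunflower) method: exploiting that $\calS$ is $t$-intersecting with all elements of size $\le q$ (and using \Cref{lem:binom_bound} to bound the relevant binomial coefficients), one extracts from $\calS$ a family $\{U_1,\dots,U_m\}$ of sets of sizes $\ge t+1$, built from sunflower kernels inside $\calS$, such that every $S \in \calS$ contains some $U_i$ while $\sum_{i=1}^m r^{-(|U_i|-t)} \le \eps$: the number of candidate $U_i$ of size $t+j$ is bounded by a degree-$j$ polynomial in $q$, which is swamped by the saving $r^{-j} \le (\eps/24q)^j$ (this is what one already sees on the extremal families $\{F : |F \cap [t+2j]| \ge t+j\}$, which are covered by $\binom{[t+2j]}{t+j}$). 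Then $\calA[\calS] \subseteq \bigcup_i \calA[U_i]$ yields $|\calA[\calS]| \le \sum_i r^{-(|U_i|-t)}M \le \eps M$.

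An alternative, closer to the technique behind \Cref{lem:strong_spread_approx}, runs by contradiction: $\calA[\calS]$ is itself $t$-intersecting, and if $|\calA[\calS]| > \eps M$ then, after restricting to links above two small sets located via a sunflower in $\calS$ and refining these to be $(r/6)$-spread using \Cref{lem:spread_family}, a uniformly random $2$-coloring of $[N]$ together with the Spread Lemma (\Cref{lem:spread_lemma}) produces two members of $\calA[\calS]$ whose overlap is forced below $t$, contradicting the $t$-intersecting property --- and the hypothesis $\eps r \ge 24q$ is exactly what drives the Spread Lemma's failure probability below $\tfrac12$ after a union bound. I expect this assembly step to be the main obstacle: all of the per-element and per-$t$-set estimates carry only a $q/r$-type gain, and converting them into a global $\eps$-bound genuinely requires either pinning down a bounded-complexity kernel of $\calS$ through the delta-system method, or a delicate iterated use of the Spread Lemma; everything else is routine bookkeeping.
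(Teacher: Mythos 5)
Note first that the paper does not prove \Cref{thm:structure}: it is stated as a black-box citation to \cite[Thm.~14]{partitions}, so there is no in-paper proof to compare against and your argument must be assessed on its own. Your first two reductions are correct, and the local estimate $|\calA[\calS[T]]| \le (\eps/24)\,|\calA[T]|$ is a clean and genuine gain over the trivial bound. However, there is a real gap precisely where you flag one: the assembly step is not carried out. The delta-system (sunflower kernel) sketch is a plausible direction, and the arithmetic you check on the model family $\{F : |F \cap [t+2j]| \ge t+j\}$ is consistent, but the existence of a covering family $\{U_1,\dots,U_m\}$ with $|U_i|\ge t+1$, with every $S\in\calS$ containing some $U_i$, and with $\sum_i r^{-(|U_i|-t)}\le\eps$, for an \emph{arbitrary} nontrivial $t$-intersecting $\calS\subseteq\binom{[N]}{\le q}$, is itself a nontrivial structural statement that you assert rather than prove. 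In particular the claim that the number of candidate $U_i$ of size $t+j$ is bounded by a degree-$j$ polynomial in $q$ needs a precise formulation and an argument, and it is not spelled out how the sunflowers are chosen or why their cores cover $\calS$.

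Your alternative sketch contains a concrete error: you write that ``$\calA[\calS]$ is itself $t$-intersecting,'' but this does not follow from the hypotheses. The theorem only assumes $\calA$ is $(r,t)$-spread, not $t$-intersecting; indeed in the paper's application of \Cref{thm:structure} the role of $\calA$ is played by $\calT_n$, the family of \emph{all} spanning trees, which is nowhere near $t$-intersecting. So a random $2$-coloring that exhibits two members of $\calA[\calS]$ with overlap below $t$ produces no contradiction. Nor can one get a contradiction from the $t$-intersecting property of $\calS$ itself, since that is given; any contradiction must be driven by the \emph{nontriviality} of $\calS$, which is exactly what your local estimate and kernel sketch exploit. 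The delta-system route is therefore the one worth developing, but as written the proof is incomplete.
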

\section{Families of Spanning Trees}
\label{sec:counting_trees}
We now analyze the family $\calT_n$ of 
spanning tree of complete graph $K_n$.
We use the following formula for number of spanning trees
containing a fixed forest.
\begin{lemma}[{\cite[Lem.~6]{lll_trees}}]
\label{lem:counting_trees}
Let $F$ be a forest with $r$ connected component 
with size $q_1,\dots,q_r$.
Then, 
$$|\calT_n[F]| = q_1q_2\dots q_r n^{n-2-\sum_{i=1}^r 
(q_i-1)}.$$
\end{lemma}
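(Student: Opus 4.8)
The plan is to compute $|\calT_n[F]|$ by a contraction bijection that turns spanning trees containing $F$ into trees on the components of $F$, and then to evaluate the resulting sum with a weighted version of Cayley's formula. First I would reduce to the case where $F$ is a \emph{spanning} forest of $K_n$: declare each vertex missed by $F$ to be its own singleton component. This multiplies the product $q_1\cdots q_r$ by $1$ and adds nothing to $\sum_i(q_i-1)$, so neither side of the claimed identity changes. We may thus assume $\sum_{i=1}^r q_i=n$; since then $n-2-\sum_i(q_i-1)=r-2$, it remains to prove
$$|\calT_n[F]| \;=\; q_1q_2\cdots q_r\, n^{r-2}.$$
(When $r=1$ the forest $F$ is already the unique spanning tree and both sides equal $1$, so assume $r\ge 2$ below.)

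Next I would establish the bijection. Let $C_1,\dots,C_r$ be the components of $F$, with $|C_i|=q_i$. Given a spanning tree $T\supseteq F$, its complementary edge set $T\setminus F$ has $(n-1)-(n-r)=r-1$ edges; contracting each $C_i$ to a single vertex carries $T$ to a connected graph on $[r]$ with $r-1$ edges, hence a tree $\tau$ on $[r]$. Every edge of $T\setminus F$ joins two distinct components (an edge inside some $C_i$ would create a cycle in $T$, since the edges of $F$ within $C_i$ already span $C_i$), and for each $\{i,j\}\in E(\tau)$ there is \emph{exactly} one edge of $T\setminus F$ between $C_i$ and $C_j$ (two would produce a cycle upon contraction, impossible with only $r-1$ edges). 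Conversely, from a tree $\tau$ on $[r]$ together with, for every $\{i,j\}\in E(\tau)$, a chosen pair $(a,b)\in C_i\times C_j$, the union of $F$ with those $r-1$ edges is connected and has $n-1$ edges, hence is a spanning tree containing $F$; the two maps invert each other. Counting the endpoint choices ($q_iq_j$ per edge of $\tau$) yields
$$|\calT_n[F]| \;=\; \sum_{\tau\ \text{tree on}\ [r]}\ \prod_{\{i,j\}\in E(\tau)} q_iq_j \;=\; \sum_{\tau}\ \prod_{i=1}^r q_i^{\deg_\tau(i)}.$$

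Finally I would evaluate this sum by pulling out one factor of $q_i$ at each vertex (legitimate since $\deg_\tau(i)\ge 1$ for $r\ge 2$) and applying the weighted Cayley identity $\sum_{\tau\ \text{on}\ [r]}\prod_i x_i^{\deg_\tau(i)-1}=(x_1+\dots+x_r)^{r-2}$, which is immediate from the Prüfer correspondence: vertex $i$ occurs exactly $\deg_\tau(i)-1$ times in the Prüfer word, and the word ranges over all of $[r]^{r-2}$. This gives
$$|\calT_n[F]| \;=\; \Big(\prod_{i=1}^r q_i\Big)\sum_{\tau}\prod_{i=1}^r q_i^{\deg_\tau(i)-1} \;=\; \Big(\prod_{i=1}^r q_i\Big)\Big(\sum_{i=1}^r q_i\Big)^{r-2} \;=\; q_1\cdots q_r\, n^{r-2},$$
as desired. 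A self-contained alternative for the last step avoids quoting weighted Cayley: contracting $F$ identifies $|\calT_n[F]|$ with the number of spanning trees of the weighted multigraph $K_n/F$ on $r$ vertices having $q_iq_j$ parallel edges between $i$ and $j$, whose weighted Laplacian is $nD-\mathbf q\mathbf q^{\top}$ with $D=\mathrm{diag}(q_1,\dots,q_r)$ and $\mathbf q=(q_1,\dots,q_r)^{\top}$; deleting the last row and column and using the matrix determinant lemma, $\det\!\big(nD'-\mathbf q'(\mathbf q')^{\top}\big)=n^{r-1}\big(\prod_{i<r}q_i\big)\big(1-\tfrac1n\sum_{i<r}q_i\big)=n^{r-2}\prod_{i=1}^r q_i$ since $\sum_{i<r}q_i=n-q_r$, and the Matrix--Tree theorem concludes. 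There is no real obstacle here, as the statement is classical; the only point demanding genuine care is the bookkeeping in the bijection --- verifying that $T\setminus F$ contracts to an honest tree on the components with no internal or repeated edges, and that the reconstruction is well defined --- so that the count comes out to be \emph{exactly} $\sum_\tau\prod q_iq_j$ with neither over- nor under-counting.
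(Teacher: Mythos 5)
The paper does not prove \Cref{lem:counting_trees}; it simply cites it from \cite{lll_trees}, so there is no in-paper proof to compare against. Your argument is correct and self-contained: the reduction to a spanning forest is sound, the contraction bijection between $\calT_n[F]$ and pairs (tree $\tau$ on $[r]$, choice of one inter-component edge per $E(\tau)$) is carefully justified (no loops since $F|_{C_i}$ spans $C_i$; no parallel edges since the contracted graph is connected with $r-1$ edges), and the identity $\sum_\tau \prod_i q_i^{\deg_\tau(i)} = \bigl(\prod_i q_i\bigr)\bigl(\sum_i q_i\bigr)^{r-2}$ follows from the Pr\"ufer/weighted-Cayley formula exactly as you state --- the same identity, incidentally, that the paper itself invokes later in the proof of \Cref{lem:few_star_like_trees}. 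The Matrix--Tree alternative you sketch (Laplacian $nD-\mathbf q\mathbf q^\top$, matrix determinant lemma) also checks out and gives $n^{r-2}\prod_i q_i$. One tiny bookkeeping note: the map you describe sends $T\mapsto(\tau,\text{endpoint choices})$ and the count is clean because each inter-component edge is determined by exactly one ordered choice in $C_i\times C_j$ after fixing an orientation convention for $\{i,j\}$; you implicitly do this correctly, but it is the sort of detail worth one more explicit sentence.
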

Using this formula, it is straightforward 
to deduce the following corollaries.
\begin{corollary}
\label{cor:trivial_case}
Let $t\leq n/2$, and let $F$ be a forest 
in $K_n$ with $t$ edges.
\begin{enumerate}[label=(\alph*)]
\item If $F$ is a union of $t$ vertex-disjoint edges,
then $|\calT_n[F]| = 2^tn^{n-t-2}$.
\item Otherwise, we have $|\calT_n[F]| \leq 
\tfrac 34\cdot 2^tn^{n-t-2}$.
\end{enumerate}
\end{corollary}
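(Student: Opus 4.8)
The plan is to apply \Cref{lem:counting_trees} directly, treating the forest $F$ as a disjoint union of its connected components and analyzing the product $q_1 q_2 \cdots q_r n^{n-2-\sum (q_i-1)}$. Since $F$ has exactly $t$ edges and $r$ components (all nontrivial components being trees), we have $\sum_{i=1}^r (q_i - 1) = t$, so the exponent of $n$ is always $n-t-2$ regardless of the component structure; the only thing that varies is the leading product $q_1 q_2 \cdots q_r$. For part (a), when $F$ is a union of $t$ vertex-disjoint edges we have $r = t$ and each $q_i = 2$, giving $2^t n^{n-t-2}$ immediately. (Strictly, \Cref{lem:counting_trees} as quoted ranges over the nontrivial components; isolated vertices contribute a factor of $1$ and do not change the count, so the formula applies with $F$ taken to be just the $t$ edges and their $2t \le n$ endpoints.)

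For part (b), the task reduces to the purely combinatorial claim that if $F$ is \emph{not} a union of $t$ disjoint edges, then $q_1 q_2 \cdots q_r \le \tfrac34 \cdot 2^t$ where the product is over the nontrivial components and $\sum (q_i - 1) = t$. First I would note that the function mapping a component of size $q$ to its contribution is best tracked as the ratio $q / 2^{q-1}$, since $2^t = \prod 2^{q_i - 1}$; thus $q_1 \cdots q_r / 2^t = \prod_i q_i / 2^{q_i - 1}$. One checks that $q/2^{q-1} \le 1$ for all $q \ge 1$, with equality exactly at $q \in \{1, 2\}$, and $q/2^{q-1} \le 3/4$ for $q = 3$ (value $3/4$) and strictly less for $q \ge 4$. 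Since $F$ is not a union of $t$ disjoint edges, at least one nontrivial component has size $q_i \ge 3$, and for that component $q_i / 2^{q_i-1} \le 3/4$; all other factors are $\le 1$, so the product is $\le 3/4$, which gives $|\calT_n[F]| \le \tfrac34 \cdot 2^t n^{n-t-2}$ as claimed.

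The only subtlety worth being careful about is the bookkeeping with isolated vertices: \Cref{lem:counting_trees} is stated for a forest with components of sizes $q_1, \dots, q_r$, and one must decide whether isolated vertices count as size-$1$ components. Either convention works since a size-$1$ component contributes a factor $1 = 1/2^0$ to both the product and to $2^t$, and contributes $0$ to $\sum(q_i-1)$; I would simply state that we may assume $F$ consists precisely of its edges together with the vertices they touch (adding isolated vertices does not change $|\calT_n[F]|$), so that $\sum(q_i - 1)$ equals the number of edges, namely $t$. This is entirely routine; there is no real obstacle here, as the corollary is an immediate consequence of the cited formula plus the elementary inequality $q \le 2^{q-1}$ with its equality cases. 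The main ``work,'' such as it is, lies in recording the small case analysis $q=3$ giving exactly the factor $3/4$ that appears in the statement.
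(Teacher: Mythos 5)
Your proof is correct, and it takes a genuinely different (and somewhat cleaner) route to part (b) than the paper does. Both arguments reduce the problem via \Cref{lem:counting_trees} to bounding the product $q_1\cdots q_r$ of nontrivial component sizes subject to $\sum(q_i-1)=t$, and both treat isolated vertices the same way (they contribute a factor $1$ and can be ignored). The difference is in how the product is bounded. You factor the ratio as $\prod_i q_i/2^{q_i-1}$, note that each factor is $\le 1$ with equality only for $q_i\in\{1,2\}$, and that any factor with $q_i\ge 3$ is at most $3/4$; since $F$ is not a union of $t$ disjoint edges, at least one such factor appears, and the termwise bound finishes immediately. The paper instead runs a smoothing/exchange argument: among all tuples $(a_1,\dots,a_m)$ with each $a_i\ge 2$, $\sum(a_i-1)=t$, and not all $a_i=2$, it shows the maximum of $\prod a_i$ is attained at a permutation of $(3,2,\dots,2)$, by repeatedly replacing a component of size $a_1\ge 3$ with one of size $a_1-1$ together with a fresh size-$2$ component (which strictly increases the product while preserving $\sum(a_i-1)$). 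Both are valid; your termwise inequality $q\le 2^{q-1}$ avoids having to identify and argue about the extremal configuration, at the small cost of checking a one-line calculus fact about $q/2^{q-1}$.
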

\begin{proof}
\begin{enumerate}[label=(\alph*)]
\item Immediate from \Cref{lem:counting_trees}.
\item Let $a_1,\dots,a_m\geq 2$ be the sizes of 
connected components of $F$, not including isolated vertices.
Then, $\sum_{i=1}^m (a_i-1) = t$.
Consider the choice of $m$ and $a_1,\dots,a_m$
such that $\sum_{i=1}^m (a_i-1)=1$,
not all of $a_1,\dots,a_m$ are $2$,
and the product $a_1\cdots a_m$ is maximal.
If $m\leq t-2$, then at least one index, say $a_1$,
is at least $3$. Then, we replace 
$(a_1,\dots,a_m)$ by $(a_1-1,a_2,a_3,\dots,a_m,2)$
to get a higher product.
We do this until we reach $m=t-1$,
in which case $(a_1,\dots,a_m)$ is a permutation of 
$(3,2,2,\dots,2)$, so $a_1\cdots a_m \leq 3\cdot 2^{t-2}$,
and hence 
\[|\calT_n[F]| = a_1\cdots a_m n^{n-t-2} 
\leq 3\cdot 2^{t-2} \cdot n^{n-t-2}.
\qedhere\]
\end{enumerate}
\end{proof}
\begin{corollary}[{\cite[Lem.~4.3]{trees}}]
\label{cor:tree_spread}
The family $\calT_n$ is $(n/2, n-1)$-spread.
\end{corollary}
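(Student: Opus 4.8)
The plan is to unwind the definitions and reduce everything to a single one-edge estimate. Identify each spanning tree with its edge set, a subset of $\binom{[n]}2$; we must show that $\calT_n(T)$ is $(n/2)$-spread for every edge set $T$ with $|T|\le n-1$, i.e.\ that
$$|\calT_n[T\cup G]|\le (n/2)^{-|G|}\,|\calT_n[T]|$$
for every edge set $G$ disjoint from $T$. If $T$ contains a cycle then $\calT_n[T]=\emptyset$ and there is nothing to check, so we may take $T=F$ to be a forest.

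First I would reduce to $|G|=1$. Enumerate $G=\{e_1,\dots,e_m\}$ and put $F_j=F\cup\{e_1,\dots,e_j\}$. If some $F_j$ contains a cycle, then so does $F\cup G$, whence $\calT_n[F\cup G]=\emptyset$ and the inequality holds trivially. Otherwise every $F_j$ is a forest, each $|\calT_n[F_j]|>0$ by \Cref{lem:counting_trees}, and
$$|\calT_n[F\cup G]|=|\calT_n[F]|\prod_{j=1}^m\frac{|\calT_n[F_j]|}{|\calT_n[F_{j-1}]|},$$
so it suffices to bound each ratio by $2/n$.

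The core step is thus the following: if $F$ is a forest in $K_n$ and $e$ is an edge with $F\cup\{e\}$ still a forest, then $|\calT_n[F\cup\{e\}]|\le\tfrac2n|\calT_n[F]|$. List the components of $F$, counting every isolated vertex as a component of size $1$, with sizes $q_1,\dots,q_r$; then $\sum_i q_i=n$, so $\sum_i(q_i-1)=n-r$ and \Cref{lem:counting_trees} gives $|\calT_n[F]|=q_1\cdots q_r\cdot n^{\,r-2}$. Since $F\cup\{e\}$ is acyclic, $e$ joins two distinct components, say of sizes $q_a$ and $q_b$, merging them into a single component of size $q_a+q_b$ and leaving $r-1$ components in total, so $|\calT_n[F\cup\{e\}]|=\bigl(\prod_{i\ne a,b}q_i\bigr)(q_a+q_b)\cdot n^{\,r-3}$. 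Dividing the two expressions,
$$\frac{|\calT_n[F\cup\{e\}]|}{|\calT_n[F]|}=\frac{q_a+q_b}{q_a q_b\,n}=\frac1n\Bigl(\frac1{q_a}+\frac1{q_b}\Bigr)\le\frac2n,$$
since $q_a,q_b\ge1$, which is exactly the bound we wanted. I do not expect a genuine obstacle here: the whole argument is driven by this last computation, and the only point needing care is the degenerate case in which $F\cup G$ contains a cycle, which is precisely why the spreadness inequality must be phrased for an arbitrary disjoint $G$ rather than only when $F\cup G$ happens to be a forest.
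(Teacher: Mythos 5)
Your proof is correct and is the natural argument: the paper itself does not prove this corollary but cites \cite[Lem.~4.3]{trees}, and your reduction to a single-edge step followed by the computation $\frac{q_a+q_b}{q_aq_b\,n}\le\frac{2}{n}$ from \Cref{lem:counting_trees} is exactly the expected proof. One small remark: since every spanning tree has exactly $n-1$ edges, the $(n/2,n-1)$-spread condition is only meaningful when read as requiring $\calT_n(T)$ to be $(n/2)$-spread for all $T$ of size \emph{at most} $n-1$ (for $|T|=n-1$ exactly, $\calT_n(T)$ is $\emptyset$ or $\{\emptyset\}$ and the condition is vacuous); you interpreted it this way, and indeed you proved the even stronger statement that $\calT_n(T)$ is $(n/2)$-spread for every edge set $T$.
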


A forest $T$ is said to be \vocab{$c$-star-like}
if there exists an edge adjacent to at least $n/c$
other edges.
This implies that at least one vertex has 
degree at least $n/(2c)$.
The next goal of this subsection is to prove the 
following lemma.
\begin{lemma}
\label{lem:counting_avoiding_trees}
Suppose that $n\geq 1000$.
Let $F$ be a union of $t$-disjoint edges in $K_n$,
and let $T_0$ be a forest in $K_n$.
Let $N$ be the number of trees that contains $F$
but does not intersect $T_0\setminus F$.
\begin{enumerate}[label=(\alph*)]
\item If $T_0$ is not $12$-star-like, then 
$N \geq 0.01 |\calT_n(F)|$.
\item If $T_0$ is $12$-star-like but not a star, then 
$N\geq n^{n-t-100}$.
\end{enumerate}
\end{lemma}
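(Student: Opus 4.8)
The plan is to use the Lopsided Lov\'asz Local Lemma (LLLL) over the uniform probability space on $\calT_n(F)$, i.e.\ the set of spanning trees containing the fixed matching $F$. For each edge $e \in T_0 \setminus F$, let $B_e$ be the bad event ``$e$ lies in the random tree.'' We want to lower-bound $\mathbb{P}\bigl(\bigcap_e \overline{B_e}\bigr)$, which, multiplied by $|\calT_n(F)|$, is exactly $N$. The key input is \Cref{lem:counting_trees}: for a forest $F'$ with components of sizes $q_1,\dots,q_r$, $|\calT_n[F']| = q_1\cdots q_r\, n^{n-2-\sum(q_i-1)}$. From this one reads off that $\mathbb{P}(B_e) = |\calT_n[F\cup e]| / |\calT_n[F]|$; adding one edge $e$ to $F$ either merges two matching-edge components into a path of size $3$ (when $e$ joins two $F$-edges), or attaches a new vertex to an $F$-component (size $2\to 3$), or joins two isolated vertices, etc. In every case the effect is to multiply by roughly $\tfrac{c}{n}$ for a small constant $c$ (at most $3/n$, typically), so $\mathbb{P}(B_e) = O(1/n)$ for every $e$. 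More generally, for a set $S$ of edges of $T_0\setminus F$ that together with $F$ forms a forest, $\mathbb{P}\bigl(\bigcap_{e\in S} B_e\bigr) = |\calT_n[F\cup S]|/|\calT_n[F]|$, and if $F\cup S$ is not a forest the probability is $0$; this product structure is what makes the lopsided (negative-correlation / cluster-expansion) version of the local lemma applicable with the dependency graph being the line graph of $T_0$ (two edges are adjacent iff they share a vertex).

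For the dependency bound: the event $B_e$ is negatively correlated with $\{B_f : f \text{ shares no vertex with } e\}$ in the precise sense LLLL requires, so $e$ depends only on edges of $T_0$ sharing an endpoint with it. If $T_0$ has maximum degree $\Delta$, then each $B_e$ has at most $2(\Delta-1)$ neighbors in the dependency graph. Assigning each $B_e$ the LLLL-weight $x_e = K/n$ for a suitable constant $K$, the local lemma condition $\mathbb{P}(B_e) \le x_e \prod_{f\sim e}(1-x_f)$ reduces to $O(1/n) \le (K/n)(1 - K/n)^{2\Delta}$, which holds as long as $\Delta = O(n/\log n)$, in fact as long as $\Delta \le n/c'$ for a small constant — and then the LLLL gives $N = \mathbb{P}\bigl(\bigcap\overline{B_e}\bigr)|\calT_n(F)| \ge \prod_e(1-x_e)\,|\calT_n(F)| \ge (1-K/n)^{n-1}|\calT_n(F)| \ge e^{-2K}|\calT_n(F)| \ge 0.01|\calT_n(F)|$ after tuning $K$. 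Since $T_0$ being ``not $12$-star-like'' precisely means no edge is adjacent to $n/12$ others, hence $\Delta < n/12 + 1$ or so, this is exactly the regime we need, proving part~(a).

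For part~(b), $T_0$ is $12$-star-like but not a star, so there is a high-degree vertex $v$ (degree $\ge n/24$ by the remark after the definition) but $T_0$ has an edge $f_0$ not incident to $v$. Here the LLLL fails at $v$ because too many bad events cluster there. Instead I would condition: restrict to spanning trees of $K_n$ containing $F$ but containing \emph{none} of the edges at $v$ that lie in $T_0$ — equivalently delete those edges from $K_n$ and count. A direct computation via a variant of \Cref{lem:counting_trees} (or the matrix-tree theorem on the reduced graph, or a cruder counting argument: fix the tree on the remaining structure and bound the number of ways to attach $v$ and the rest) shows this count is still at least $n^{n-t-C}$ for an absolute constant $C$, because removing $O(n)$ edges at a single vertex from $K_n$ only reduces the spanning-tree count by a polynomial factor; then on that subfamily the remaining bad edges of $T_0\setminus F$ (those not at $v$) again form a bounded-degree dependency graph and a second, smaller LLLL (or even a union bound, since there are $O(n)$ of them each of probability $O(1/n)$, but that only gives a constant which isn't quite enough — so LLLL or a second-moment argument is safer) finishes the job with room to spare, yielding $N \ge n^{n-t-100}$.

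The main obstacle I expect is part~(b): making the ``deleting the star edges at $v$ costs only a polynomial factor'' step rigorous and clean. The cleanest route is probably to avoid conditioning and instead bound $N$ from below by choosing a sub-collection of trees explicitly — e.g.\ trees where $v$ is a leaf attached to a generic vertex and the rest of the tree is built on $[n]\setminus\{v\}$ avoiding $T_0 \setminus F$ there (now $v$ contributes no bad events and the residual $T_0$ on $[n]\setminus\{v\}$ has max degree controlled well enough, or is handled by induction/part~(a) applied to the smaller graph), giving a factor of about $(n-1)$ for placing $v$ times the part-(a) type bound on $[n-1]$. Quantifying the ``$-100$'' slack then just requires tracking how many times one recurses or how many vertices one peels off.
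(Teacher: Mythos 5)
Your plan for part (a) is the paper's plan — LLLL on the uniform measure on $\calT_n[F]$ with constant weights $\Theta(1/n)$ — but the dependency graph you propose (the line graph of $T_0$, i.e.\ ``depends only on edges sharing an endpoint'') is not a valid negative dependency graph here. The product structure that the lopsided LLL needs holds only for forests that are \emph{$F$-disjoint}, meaning no vertex of one lies in the same $F$-component as a vertex of the other; mere vertex-disjointness is not enough. Concretely, if $uu'\in F$ and $e_1=ux$, $e_2=u'y$ are vertex-disjoint edges of $T_0\setminus F$ with $x,y$ isolated in $F$, then by \Cref{lem:counting_trees} one computes $\Pr(A_{e_1})=\Pr(A_{e_2})=\tfrac{3}{2n}$ while $\Pr(A_{e_1}\wedge A_{e_2})=\tfrac{2}{n^2}<\tfrac{9}{4n^2}$, so $\Pr(A_{e_1}\mid\overline{A_{e_2}})>\Pr(A_{e_1})$ and the required inequality fails for $J=\{e_2\}$. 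So $e_2$ must be counted as a neighbour of $e_1$. The fix — which is exactly what \Cref{lem:forest_independent} and \Cref{lem:dep_graph} supply — is to take the dependency neighbourhood of $e$ to be all $f\in T_0\setminus F$ that touch some $F$-component met by $e$. Since $F$ is a matching this at most doubles the neighbourhood size, so your $x_e=K/n$ analysis still goes through after this correction; without it the argument is wrong.

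For part (b) your ``cleanest route'' (bound $N$ from below by placing the high-degree vertex $v$ as a leaf on a generic neighbour and running a part-(a)-type bound on $[n]\setminus\{v\}$) is the paper's approach in miniature, but you only peel once. After deleting $v$, the residual forest $T_1=T_0\setminus\{v\}$ can still be $12$-star-like (think of a double or multi-broom with several vertices of degree $\sim n/24$), so a single peel does not guarantee part (a) applies. The paper iterates the peeling and shows it must terminate within $24$ steps by a total-degree count, and the iteration is what produces the weak exponent $n^{n-t-100}$ rather than a sharper $n^{n-t-O(1)}$. Your first alternative (delete the edges of $T_0$ at $v$ from $K_n$ and count spanning trees of the reduced graph) is a genuinely different route and could work via matrix-tree, but you flag it as unverified; to finish either version you still need to handle multiple high-degree vertices and track the constant in the exponent, which is the part your sketch leaves open.
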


This lemma was proven as \cite[Prop.~3.1]{trees}
using the Lopsided Lov\'asz Local Lemma,
but their proof contains an error in their Corollary 3.4.
We fix their proof.
We begin by reviewing the Lopsided Lov\'asz Local Lemma in \Cref{subsec:llll}.
Then, we prove \Cref{lem:counting_avoiding_trees} in 
\Cref{subsec:counting_trees}.
\subsection{Lopsided Lov\'asz Local Lemma} 
\label{subsec:llll}
The key tool that we will use is the Lopsided 
Lov\'asz Local Lemma, first introduced by Erd\H os and Spencer 
\cite{lopsided_lll}.
We first recall the statement of the lemma below.

Let $A_1,\dots,A_n$ be a collection of events 
in some probability space.
A \vocab{negative dependency graph} $G$ 
is a graph on vertex set $[n]$ such that for all $i\in [n]$,
$$\mathbb P\left(A_i \left| 
    \bigwedge_{j,\ (j,i)\in E(G)} \overline{A_j}
\right.\right) 
\leq \mathbb P(A_i).$$
\begin{lemma}[Lopsided Lov\'asz Local Lemma, LLLL]
\label{lem:llll}
Let $A_1,\dots,A_n$ be a collection of events 
with negative dependency graph $G$.
Suppose that there exists real numbers $x_1,\dots,x_n>0$
such that 
$$\mathbb P(A_i) \leq x_i \prod_{j,\ (j,i)\in E(G)}
(1-x_j).$$
Then, the probability that none of the events $A_1,\dots,A_n$
holds is at least $\prod_{j=1}^n (1-x_j)$.
\end{lemma}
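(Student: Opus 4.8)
The plan is to run the standard inductive proof of the (asymmetric) Lov\'asz Local Lemma, changing only the one step where the classical argument invokes genuine independence from the non-neighbors; there, we instead invoke the defining inequality of a negative dependency graph. Write $N(i) := \{j : (j,i) \in E(G)\}$, and observe that the only interesting case is $x_i \in (0,1)$ for every $i$, which we assume henceforth. The heart of the argument is the following claim, to be proved by induction on $|S|$: \emph{for every $i \in [n]$ and every $S \subseteq [n]\setminus\{i\}$ with $\mathbb P\big(\bigwedge_{j\in S}\overline{A_j}\big) > 0$, one has $\mathbb P\big(A_i \mid \bigwedge_{j\in S}\overline{A_j}\big) \leq x_i$.}

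For the base case $S=\emptyset$, the hypothesis gives $\mathbb P(A_i) \leq x_i \prod_{j\in N(i)}(1-x_j) \leq x_i$, since each $1-x_j \in (0,1)$. For the inductive step, set $S_1 := S\cap N(i)$ and $S_2 := S\setminus N(i)$, note $\mathbb P\big(\bigwedge_{k\in S_2}\overline{A_k}\big)>0$, and expand
\[
\mathbb P\!\left(A_i \,\Big|\, \bigwedge_{j\in S}\overline{A_j}\right)
= \frac{\mathbb P\!\left(A_i \wedge \bigwedge_{j\in S_1}\overline{A_j} \,\Big|\, \bigwedge_{k\in S_2}\overline{A_k}\right)}{\mathbb P\!\left(\bigwedge_{j\in S_1}\overline{A_j} \,\Big|\, \bigwedge_{k\in S_2}\overline{A_k}\right)}.
\]
The numerator is at most $\mathbb P\big(A_i \mid \bigwedge_{k\in S_2}\overline{A_k}\big)$, and since $S_2$ avoids both $i$ and $N(i)$, the negative dependency property bounds this by $\mathbb P(A_i) \leq x_i\prod_{j\in N(i)}(1-x_j)$. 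For the denominator, enumerate $S_1=\{j_1,\dots,j_m\}$ and telescope:
\[
\mathbb P\!\left(\bigwedge_{\ell=1}^m \overline{A_{j_\ell}} \,\Big|\, \bigwedge_{k\in S_2}\overline{A_k}\right)
= \prod_{\ell=1}^m \mathbb P\!\left(\overline{A_{j_\ell}} \,\Big|\, \bigwedge_{\ell' < \ell}\overline{A_{j_{\ell'}}} \wedge \bigwedge_{k\in S_2}\overline{A_k}\right) \geq \prod_{\ell=1}^m (1-x_{j_\ell}) \geq \prod_{j\in N(i)}(1-x_j),
\]
where the first inequality applies the induction hypothesis to each factor (its conditioning set has fewer than $|S|$ elements) and the second holds because $S_1\subseteq N(i)$ and all factors lie in $(0,1]$. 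Dividing the two bounds gives $\mathbb P\big(A_i \mid \bigwedge_{j\in S}\overline{A_j}\big)\leq x_i$, completing the induction.

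To finish, order the events arbitrarily and apply the chain rule:
\[
\mathbb P\!\left(\bigwedge_{i=1}^n \overline{A_i}\right)
= \prod_{i=1}^n \mathbb P\!\left(\overline{A_i} \,\Big|\, \bigwedge_{i'<i}\overline{A_{i'}}\right)
= \prod_{i=1}^n \left(1 - \mathbb P\!\left(A_i \,\Big|\, \bigwedge_{i'<i}\overline{A_{i'}}\right)\right)
\geq \prod_{i=1}^n (1-x_i),
\]
where each factor is controlled by the claim with $S=\{1,\dots,i-1\}$; the conditioning events stay of positive probability throughout, since the claim forces $\mathbb P\big(A_i \mid \bigwedge_{i'<i}\overline{A_{i'}}\big)\leq x_i<1$ at every step, so no partial conjunction $\bigwedge_{i'\leq i}\overline{A_{i'}}$ can have probability $0$ (and if $\mathbb P(A_1)=0$ the induction starts trivially). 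The only genuinely delicate point will be the numerator estimate in the inductive step: this is the sole place where independence is unavailable, so one must use the one-directional negative dependency inequality and check that it is applied in the correct direction, i.e. that conditioning on the non-occurrence of the non-neighbor events $\{A_k : k\in S_2\}$ can only decrease $\mathbb P(A_i)$. Everything else — splitting $S$ along $N(i)$, the telescoping of the denominator, and the running check that all conditioning events have positive probability — is routine bookkeeping.
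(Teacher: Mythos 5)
Your proof is correct and is precisely the standard inductive argument that the paper's one-line proof delegates to Alon--Spencer, Lemma 5.1.1: the same decomposition of $S$ into $S\cap N(i)$ and $S\setminus N(i)$, the same telescoping of the denominator, and the same chain-rule finish, with the lone change being that the ``independence from non-neighbors'' step of the classical proof is replaced by the one-sided negative dependency inequality. One point worth making explicit: your inductive step uses $\mathbb P\bigl(A_i \mid \bigwedge_{k\in S_2}\overline{A_k}\bigr)\le \mathbb P(A_i)$ for an \emph{arbitrary} subset $S_2$ of non-neighbors of $i$ (and with the correct non-neighbor convention), which is the standard Erd\H os--Spencer/Lu--Sz\'ekely definition and is exactly what \Cref{lem:dep_graph} later verifies, but is stronger than the single, fixed conjunction the displayed definition in \Cref{subsec:llll} literally asserts --- so you are silently using the right version of the hypothesis, which is needed for the induction to close; likewise your restriction to $x_i\in(0,1)$ is not merely the ``interesting'' case but the hypothesis under which the statement is actually true.
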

\begin{proof}
The proof is exactly the same as the standard 
Lov\'asz Local Lemma: see \cite[Lem.~5.1.1]{alon_spencer}.
\end{proof}

Using \Cref{lem:counting_trees}, we now give 
the negative dependency graph that we will use.
We make the following setup.
\begin{itemize}
\item Let $F$ be a \textbf{spanning} forest in $K_n$.
\item Let $T$ be uniformly sampled from $\calT_n[F]$.
\item For each (not necessarily spanning) forest $H$, let $A_H$ be the event that 
$T$ contains $H$.
\end{itemize}
Two (not necessarily spanning) 
forests $H$ and $H'$ of $K_n$ are \vocab{$F$-disjoint}
if and only if there is no vertex 
$v$ of $H$ and $v'$ of $H'$ that are in the 
same connected component of $F$.

\begin{lemma}
\label{lem:forest_independent}
If $H_1$ and $H_2$ are $F$-disjoint, then 
$\mathbb P(A_{H_1}\wedge A_{H_2}) = 
\mathbb P(A_{H_1}) \mathbb P(A_{H_2})$.
\end{lemma}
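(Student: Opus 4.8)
The plan is to compute both sides directly using the explicit formula for $|\calT_n[H]|$ from \Cref{lem:counting_trees}, exploiting the fact that $F$ is a \emph{spanning} forest so that the connected-component structure of $F \cup H$ is controlled entirely by which components of $F$ the edges of $H$ touch. First I would observe that since $T$ is sampled uniformly from $\calT_n[F]$, we have $\mathbb P(A_H) = |\calT_n[F \cup H]| / |\calT_n[F]|$ for any forest $H$ (interpreting the probability as $0$ if $F \cup H$ contains a cycle, which is consistent with $|\calT_n[F\cup H]| = 0$). So the identity to prove is
$$|\calT_n[F\cup H_1 \cup H_2]| \cdot |\calT_n[F]| = |\calT_n[F\cup H_1]| \cdot |\calT_n[F\cup H_2]|.$$

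The key structural point is this: because $F$ spans all $n$ vertices, adding the edges of $H_1$ to $F$ has the effect of merging certain components of $F$ into larger blocks, and the resulting graph $F \cup H_1$ is a forest whose components are exactly these merged blocks (assuming no cycle is created; if a cycle is created both sides of the relevant identity vanish). Write the components of $F$ as $C_1, \dots, C_m$ with sizes $p_1, \dots, p_m$, so $|\calT_n[F]| = (\prod_j p_j)\, n^{n-2-\sum_j(p_j-1)} = (\prod_j p_j)\, n^{m-2}$ using $\sum_j p_j = n$. Since $H_1$ and $H_2$ are $F$-disjoint, the set of $F$-components touched by $H_1$ is disjoint from the set touched by $H_2$. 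Thus in $F \cup H_1 \cup H_2$, the merging caused by $H_1$ and the merging caused by $H_2$ happen on disjoint collections of $F$-components and do not interact: the multiset of component sizes of $F \cup H_1 \cup H_2$ is obtained from that of $F$ by applying the $H_1$-merges and the $H_2$-merges independently.

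Now I would just bookkeep the formula from \Cref{lem:counting_trees}. Suppose $F \cup H_1$ has components of sizes obtained by partitioning $\{p_1,\dots,p_m\}$ into blocks, with the blocks touched by $H_1$ being summed up; say this yields sizes $a_1, \dots, a_{k_1}$ together with the untouched $p_j$'s. Because $F \cup H_1$ is still spanning, $|\calT_n[F\cup H_1]| = (\text{product of all component sizes})\cdot n^{(\#\text{components})-2}$. Taking the ratio $|\calT_n[F\cup H_1]|/|\calT_n[F]|$, all the untouched factors cancel, leaving a quantity depending only on how $H_1$ merges its own $F$-components — call it $\rho(H_1)$ — and similarly $|\calT_n[F\cup H_2]|/|\calT_n[F]| = \rho(H_2)$. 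By the disjointness of the touched component sets, $|\calT_n[F\cup H_1\cup H_2]|/|\calT_n[F]| = \rho(H_1)\rho(H_2)$, which is exactly the desired identity. The main (mild) obstacle is the careful case handling when $F \cup H_i$ fails to be a forest: I would note that if $F\cup H_1$ has a cycle then $F\cup H_1\cup H_2$ does too, so $\mathbb P(A_{H_1}) = 0$ and $\mathbb P(A_{H_1}\wedge A_{H_2}) = 0$ and the identity holds trivially; the same for $H_2$. Beyond that, the argument is a direct substitution into \Cref{lem:counting_trees}, with the $F$-disjointness hypothesis doing precisely the work of making the component-merging contributions multiplicative.
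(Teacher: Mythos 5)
Your proposal is correct and follows essentially the same route as the paper: compute $\mathbb P(A_H) = |\calT_n[F\cup H]|/|\calT_n[F]|$ via \Cref{lem:counting_trees}, dispose of the cycle case where everything vanishes, and then observe that $F$-disjointness makes the component-merging caused by $H_1$ and $H_2$ act on disjoint sets of $F$-components, so the ratios multiply. One small remark: the paper writes the ratio assuming $F\cup H_1$ merges $C_1,\dots,C_k$ into a single component (implicitly treating $H_1$ as connected); your bookkeeping via "partitioning into blocks" is slightly more careful about the case where $H_i$ is a disconnected forest, but this is a cosmetic difference, not a different method.
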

\begin{proof}
We first note that if two vertices of $H_i$
are in the same connected component of $H_i$, then 
$F\cup H_i$ is a cycle, so both sides of the equality are 
immediately $0$, and so the lemma holds.
Otherwise, assume that for all $i$, all vertices of $H_i$ 
are in different connected component of $F$.
Combining with that $H_1,H_2$ are $F$-disjoint,
we deduce that all vertices in $H_1$ and $H_2$
are in different connected component of $F$.

Let $C_1,\dots,C_k$ be the connected component 
that vertices of $H_1$ are in.
Let $D_1,\dots,D_\ell$ be the connected component 
that vertices of $H_2$ are in.
Then, we have 
\begin{align*}
\mathbb P(A_{H_1}) 
&= \frac{|\calT_n[F\cup H_1]|}{|\calT_n[F]|}
= \frac{|C_1|+\dots+|C_k|}
{n^{k}\cdot |C_1|\cdots|C_k|} \\
\mathbb P(A_{H_2}) 
&= \frac{|\calT_n[F\cup H_2]|}{|\calT_n[F]|} 
= \frac{|D_1|+\dots+|D_\ell|}
{n^{\ell}\cdot |D_1|\cdots|D_\ell|},
\end{align*}
since changing from $F$ to $F\cup H_1$ merges 
components $C_1,\dots,C_k$ into one
and the same goes for $F\cup H_2$.
By a similar logic, we have 
$$\mathbb P(A_{H_1}\wedge A_{H_2})
= \frac{|\calT_n[F\cup H_1\cup H_2]|}{|\calT_n[F]|} 
= \frac{(|C_1|+\dots+|C_k|)(|D_1|+\dots+|D_\ell|)}
{n^{k+\ell}\cdot |C_1|\cdots|C_k|\cdot |D_1|\cdots |D_\ell|},$$
as desired.
\end{proof}
\begin{lemma}[Generalization of {\cite[Thm.~4]{lll_trees}}]
\label{lem:dep_graph}
Let $F$ be a spanning forest in $K_n$,
and let $T$ be uniformly sampled from $\calT_n[F]$.
Suppose that $H_1,\dots,H_k$ are (not necessarily spanning) 
forests of $K_n$.
Let $G$ be the graph on vertex set $[k]$ and 
$$(i,j)\in E(G) \text{ if and only if }
H_i\text{ and }H_j\text{ are } F\text{-disjoint}.$$
Then, $G$ is a negative dependency graph
for events $A_{H_1},\dots,A_{H_k}$.
\end{lemma}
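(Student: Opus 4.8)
The statement to prove is Lemma 4.15 (\texttt{lem:dep\_graph}): the $F$-disjointness graph $G$ is a negative dependency graph for the events $A_{H_1},\dots,A_{H_k}$. The plan is to verify the defining inequality directly: fix an index $i$, let $J=\{j : (j,i)\in E(G)\}$ be the set of forests that are $F$-disjoint from $H_i$, and show that conditioning on $\bigwedge_{j\in J}\overline{A_{H_j}}$ does not increase $\mathbb{P}(A_{H_i})$. The key structural input is that $H_i$ occupies a fixed collection of connected components $C_1,\dots,C_m$ of $F$ (the components containing its vertices), and \emph{every} $H_j$ with $j\in J$ is vertex-supported, in the $F$-component sense, entirely away from $C_1\cup\dots\cup C_m$. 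So morally $A_{H_i}$ depends only on how the uniformly random $T\in\calT_n[F]$ connects up $C_1,\dots,C_m$, while the bad events $\overline{A_{H_j}}$ we condition on depend only on how $T$ behaves away from those components.

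**Key steps.** First I would handle the degenerate case: if two vertices of $H_i$ already lie in the same $F$-component, then $F\cup H_i$ contains a cycle, $\mathbb{P}(A_{H_i})=0$, and the inequality is trivial; so assume all vertices of $H_i$ lie in distinct $F$-components $C_1,\dots,C_m$ (and likewise we may assume this for each $H_j$, else $\overline{A_{H_j}}$ has probability $1$ and can be dropped from the conditioning). Second, I would describe the sample space combinatorially: by \Cref{lem:counting_trees}, sampling $T$ uniformly from $\calT_n[F]$ is equivalent to first choosing, for the "merge structure" on the components of $F$, a random spanning tree of the component-contraction multigraph with appropriate weights, and then — crucially — the attachment of each non-$H_i$-touched component to the rest is governed by data disjoint from the attachment data at $C_1,\dots,C_m$. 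The cleanest route: let $\calE$ be the event $A_{H_i}$ and let $\calF$ be any event determined by the behavior of $T$ restricted to components \emph{other than} $C_1,\dots,C_m$; I claim $\mathbb{P}(\calE\mid\calF)\le\mathbb{P}(\calE)$, in fact one can often get equality up to the monotone FKG-type correlation, but a negative-dependency bound is all that's needed. The honest way to prove this is to show the events $A_{H_j}$, $j\in J$, and $A_{H_i}$ together with the whole family enjoy the hypothesis of the standard lopsided-LLL negative-dependency criterion: one checks that for any subset $J'\subseteq J$, $\mathbb{P}(A_{H_i}\mid\bigwedge_{j\in J'}\overline{A_{H_j}})\le\mathbb{P}(A_{H_i})$, which by a short inclusion–exclusion/induction reduces to the pairwise and "joint" statements already in hand: \Cref{lem:forest_independent} gives $\mathbb{P}(A_{H_i}\wedge A_{H_j})=\mathbb{P}(A_{H_i})\mathbb{P}(A_{H_j})$ for $j\in J$, and more generally $\mathbb{P}(A_{H_i}\wedge\bigwedge_{j\in J'}A_{H_j})=\mathbb{P}(A_{H_i})\,\mathbb{P}(\bigwedge_{j\in J'}A_{H_j})$ whenever the union $\bigcup_{j\in J'}H_j$ is still $F$-disjoint from $H_i$ — which it is, since $F$-disjointness from $H_i$ is closed under unions of forests supported away from $C_1,\dots,C_m$ (the union need not be a forest, but the counting identity in the proof of \Cref{lem:forest_independent} only used that no two of the involved vertices share an $F$-component, and if the union creates a cycle then all the relevant probabilities are $0$ and the identity is vacuous). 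Third, from the full-independence identity $\mathbb{P}(A_{H_i}\wedge B)=\mathbb{P}(A_{H_i})\mathbb{P}(B)$ for every event $B$ in the $\sigma$-algebra generated by $\{A_{H_j}:j\in J\}$ (which is spanned by the atoms $\bigwedge_{j}A_{H_j}^{\pm}$, and these identities propagate to all Boolean combinations by linearity), one gets in particular $\mathbb{P}(A_{H_i}\mid\bigwedge_{j\in J}\overline{A_{H_j}})=\mathbb{P}(A_{H_i})$, which is $\le\mathbb{P}(A_{H_i})$ as required.

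**Main obstacle.** The subtle point — and where \cite{lll_trees} proves only the special case — is extending \Cref{lem:forest_independent} from a single pair $H_1,H_2$ to an arbitrary union $\bigcup_{j\in J'}H_j$ of forests all $F$-disjoint from $H_i$: such a union is generally not a forest, so one cannot blindly apply \Cref{lem:counting_trees}. The fix is to note that $A_{H_i}\wedge\bigwedge_{j\in J'}A_{H_j}=A_{H_i}\wedge A_{H'}$ where $H'=\bigcup_{j\in J'}H_j$, and to split into two cases: if $F\cup H'$ contains a cycle, then $\bigwedge_{j\in J'}A_{H_j}$ has probability $0$ and both sides of the desired product identity vanish (since $A_{H_i}$ is independent of nothing is needed — the left side is $0$, and the right side has the factor $\mathbb{P}(\bigwedge_{j\in J'}A_{H_j})=0$); if $F\cup H'$ is a forest, then $H'$ is itself a forest $F$-disjoint from $H_i$ and \Cref{lem:forest_independent} applies verbatim with $H_2:=H'$. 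Once this "monotone factorization over the conditioning algebra" is in place, converting it to the negative-dependency inequality is routine inclusion–exclusion, so I expect the bulk of the writeup to be the careful case analysis on whether $F\cup H'$ stays acyclic, together with a clean statement that $F$-disjointness from $H_i$ is preserved under taking unions.
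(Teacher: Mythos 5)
Your proposal is correct and follows essentially the same route as the paper: fix $i$, expand $\mathbb{P}\bigl(A_{H_i}\wedge\bigwedge_{j\in I}\overline{A_{H_j}}\bigr)$ by inclusion--exclusion, rewrite $\bigwedge_{j\in J}A_{H_j}$ as $A_{H'}$ with $H'=\bigcup_{j\in J}H_j$, factor out $\mathbb{P}(A_{H_i})$ using \Cref{lem:forest_independent}, and re-assemble by inclusion--exclusion to get equality (not merely the one-sided inequality the definition requires). The one place where you are more careful than the printed proof is the observation that $H'$ need not be a forest, so \Cref{lem:forest_independent} does not apply verbatim; your case split — if $F\cup H'$ has a cycle then both sides of the factorization are $0$, otherwise $H'$ is an $F$-disjoint forest and the lemma applies — is exactly the right fix and closes a small gap that the paper glosses over.
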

\begin{proof}
Fix $i\in [k]$, and let $I$ be any subset of
$\{j : (j,i)\in E(G)\}$.
From the definition of conditional probability, we have to show that 
$$\mathbb P\left(A_{H_i}\bigwedge_{j\in I} \overline{A_{H_j}}\right)
\leq \mathbb P(A_{H_i}) \cdot \mathbb P\left(\bigwedge_{j\in I}\overline{A_{H_j}}\right).$$
In fact, we claim an equality.
To prove this, we apply inclusion-exclusion to obtain that
\begin{align*}
\mathbb P\left(A_{H_i}\wedge \bigwedge_{j\in I} 
\overline{A_{H_j}}\right)
&= \sum_{J\subseteq I} (-1)^{|J|}\cdot 
\mathbb P\left(A_{H_i}\wedge \bigwedge_{j\in J} 
A_{H_j}\right) 
\tag{inclusion-exclusion} \\
&= \sum_{J\subseteq I}
(-1)^{|J|}\cdot \bbP\left(A_{H_i} 
\wedge A_{\bigcup_{j\in J}H_j}\right)\\
&= \sum_{J\subseteq I}
(-1)^{|J|}\cdot \bbP\left(A_{H_i}\right)\cdot
\bbP\left(A_{\bigcup_{j\in J}H_j}\right)
\tag{\Cref{lem:forest_independent}} \\
&= \bbP(A_{H_i})
\sum_{J\subseteq N}
(-1)^{|J|} \bbP\left(
\bigwedge_{j\in J}A_{H_j}\right) \\
&= \bbP(A_{H_i}) 
\mathbb P\left(\bigwedge_{j\in I}\overline{A_{H_j}}\right),
\tag{inclusion-exclusion}
\end{align*}
as desired.
\end{proof}
To bound the probability, we note the following:
\begin{corollary}
\label{cor:prob_edge}
For any edge $e$, we have $\mathbb P(A_e) \leq \tfrac 2n$.
\end{corollary}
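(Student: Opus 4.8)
The plan is to apply Lemma~\ref{lem:counting_trees} directly. Let $e=uv$ be an edge of $K_n$, and recall $T$ is sampled uniformly from $\calT_n[F]$ where $F$ is a spanning forest. First I would handle the degenerate case: if $u$ and $v$ lie in the same connected component of $F$, then $F\cup\{e\}$ contains a cycle, so $\calT_n[F\cup\{e\}]=\emptyset$ and $\mathbb P(A_e)=0\leq 2/n$, and we are done. So assume $u,v$ lie in distinct components of $F$, of sizes $a$ and $b$ respectively, with $a+b\leq n$.

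Now I would compute the ratio $\mathbb P(A_e)=|\calT_n[F\cup\{e\}]|/|\calT_n[F]|$ using the formula of Lemma~\ref{lem:counting_trees}. Writing the component sizes of $F$ as $a,b,q_3,\dots,q_r$ (summing to $n$), the numerator corresponds to the forest $F\cup\{e\}$ whose components have sizes $a+b,q_3,\dots,q_r$. The products $q_3\cdots q_r$ and the global exponents of $n$ cancel in the ratio after accounting for the single extra edge, leaving
\begin{equation*}
\mathbb P(A_e)=\frac{(a+b)\cdot n^{-1}}{a\cdot b}=\frac{a+b}{ab\,n}.
\end{equation*}
It remains to check $\frac{a+b}{ab}\leq \frac 2n\cdot n=2$, i.e.\ $a+b\leq 2ab$; since $a,b\geq 1$ this is immediate (indeed $a+b\leq ab+1\leq 2ab$ when $a,b\ge1$, with equality cases $a=b=1$), so $\mathbb P(A_e)\leq 2/n$.

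There is essentially no obstacle here — the only thing to be careful about is the bookkeeping of the exponent of $n$ in Lemma~\ref{lem:counting_trees} when passing from $F$ to $F\cup\{e\}$, and the edge case where $e$'s endpoints already share a component of $F$, both of which are routine.
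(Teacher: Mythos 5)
Your proof is correct, and it is essentially the same argument as the paper's: the paper simply cites Corollary~\ref{cor:tree_spread} (the $(n/2,n-1)$-spreadness of $\calT_n$), which applied to the set $F$ and the singleton $\{e\}$ yields $|\calT_n[F\cup\{e\}]|\leq (2/n)|\calT_n[F]|$ directly, whereas you re-derive the same ratio bound from Lemma~\ref{lem:counting_trees}. Since Corollary~\ref{cor:tree_spread} is itself obtained from Lemma~\ref{lem:counting_trees} by exactly this kind of ratio computation, you have just unpacked the cited corollary rather than taken a different route.
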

\begin{proof}
Follows immediately from \Cref{cor:tree_spread}.
\end{proof}
\subsection{Proof of \texorpdfstring
    {\Cref{lem:counting_avoiding_trees}}{Lemma \ref{lem:counting_avoiding_trees}}}
\label{subsec:counting_trees}
\begin{proof}[Proof of \Cref{lem:counting_avoiding_trees} (a)]
Let $T$ be tree uniformly sampled from $\calT_n[F]$.
Let $e_1,\dots,e_m$ be all edges in $T_0\setminus F$.
For each edge $e\in T_0\setminus F$,
we let $A_e$ denote the event that $T$ contains $e$.
We take the dependency graph $G$ as given by \Cref{lem:dep_graph}.
Note that since $T$ is not $12$-star-like,
each vertex has degree at most $n/12$,
so in the dependency graph, each vertex has degree at most $n/6$.

We pick $x_1=x_2=\dots=x_m=\tfrac 4n$. 
To show that this satisfies the hypothesis of 
the LLLL (\Cref{lem:llll}), we have to verify that 
$$\bbP(A_{e_i}) \leq \frac 4n \left(1-\frac 4n\right)^{n/6}.$$
From \Cref{cor:prob_edge}, we have to show that 
$\left(1-\tfrac 4n\right)^{n/6} \geq \tfrac 12$,
which can be easily checked to hold for all $n\geq 5$.
Thus, by the LLLL (\Cref{lem:llll}), we get that 
$$\frac{N}{|\calT_n[F]|} =
\mathbb P\left(\bigwedge_{i=1}^m \overline{A_{e_i}}\right)
\geq \prod_{i=1}^m (1-x_i) 
= \left(1-\tfrac 4n\right)^m \geq e^{-4} > 0.01,$$
implying the conclusion.
\end{proof}

\begin{proof}[Proof of \Cref{lem:counting_avoiding_trees} (b)]
The proof is essentially the same as 
\cite[Prop.~3.1]{trees}, except that the constant $6$
in their paper is replaced by the constant $12$
everywhere.

For simplicity, assume that $n$ even. 
(otherwise, remove a vertex and use the proof below for $n-1$.)
Initialize $F_0=F$ and $N_0=N$.
Let $v_0$ be the vertex with largest degree in $T_0$,
which has degree at least $n/24$.
We define $F_0'$ as follows.
\begin{itemize}
\item If $v_0$ is an isolated vertex in $F_0$,
then take another isolated vertex $u_0$, and let 
$F_0' = F_0\cup \{u_0v_0\}$.
This is possible because of parity.
\item Otherwise, let $F_0'=F$.
\end{itemize}
Then, we let 
$$T_1 = T_0\setminus \{v_0\},\quad F_1=F_0\setminus\{v_0\}.$$
Let $N_1$ be the number of trees containing $F_1$
and do not meet $T_1\setminus F_1$.
We claim that $N_1\leq N_0$.
To prove this, we note that any tree $T$ containing $F_1$
and does not meet $T_1\setminus F_1$
can be converted to a tree $T'$ containing $F_0$
and does not meet $T_0\setminus F_0$
by adding edge $u_0v_0$.

If $T_1$ is not $12$-star-like, then we may apply 
\Cref{lem:counting_avoiding_trees} (a).
We now repeat the process: take vertex $v_1$ with largest 
degree in $T_1$.
Then, define $T_2=T_1\setminus\{v_1\}$ and $F_2=F_1\setminus\{v_1\}$,
and repeat.

We claim that the process must stop after defining
$T_{24}$ and $F_{24}$.
Assume not. Then, the number of remaining edges 
in $T_{24}$ is at most 
$$n-1 - \left(\frac{n}{24} + \frac{n-1}{24} + 
\dots + \frac{n-23}{24}\right) \leq 12.5,$$
which is less than $\tfrac{n-24}{24}$ if $n>1000$,
a contradiction. 
Note also that $|F_{24}|\leq t$.
We then apply (a) to find that 
$$N\geq N_{24} \geq 0.01|\calT_{n-24}[F_{24}]| 
= 0.01 \cdot 2^t(n-24)^{n-t-26} > n^{n-t-100}$$
when $n>1000$, as desired.
\end{proof}
\subsection{Enumerating Star-like Trees}
We make the following useful observation.
\begin{lemma}
\label{lem:few_star_like_trees}
There are at most $2^nn^{n-\frac n{2c}}$ spanning trees of $K_n$
that are $c$-star-like.
\end{lemma}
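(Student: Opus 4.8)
The plan is to bound the number of $c$-star-like spanning trees by summing over the choice of a high-degree vertex and the choice of how the rest of the tree attaches to it. A spanning tree $T$ of $K_n$ that is $c$-star-like has (by the stated consequence of the definition) a vertex $v$ of degree at least $d := \lceil n/(2c)\rceil$. First I would fix $v$ (there are $n$ choices, which will be absorbed into the $2^n$ factor) and fix the set $S$ of $d$ neighbors of $v$ that we commit to (there are $\binom{n-1}{d}$ choices, again absorbed). Now the forest $F$ consisting of $v$ together with the $d$ edges from $v$ to $S$ is a spanning forest of $K_n$ with one component of size $d+1$ and $n-d-1$ isolated vertices, so by \Cref{lem:counting_trees} the number of spanning trees of $K_n$ containing $F$ is exactly $(d+1)\, n^{n-2-d}$.

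Putting this together, the number of $c$-star-like spanning trees is at most
\begin{equation*}
n \cdot \binom{n-1}{d} \cdot (d+1)\, n^{n-2-d}.
\end{equation*}
Then I would bound $\binom{n-1}{d} \le 2^{n}$ (indeed $\le 2^{n-1}$) and $n(d+1) \le n^2$, so the whole thing is at most $2^n \cdot n^{n-d}$, and since $d = \lceil n/(2c)\rceil \ge n/(2c)$ we get $n^{n-d} \le n^{n-n/(2c)}$, giving the claimed bound $2^n n^{n - n/(2c)}$. One has to be slightly careful that overcounting is fine here — a tree may have several high-degree vertices, or several size-$d$ subsets of neighbors of a given $v$, but since we only want an upper bound, counting each such $(v,S)$ pair separately can only inflate the count, which is acceptable. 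I should also handle the trivial edge case where $d \ge n$ (so the bound is vacuous or the family is empty); for $c \ge 1$ and $n \ge 1$ we have $d \le n$ and there is nothing pathological.

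The only mildly delicate point is matching the exact exponents and constant factors against the stated bound $2^n n^{n - n/(2c)}$: I want $\binom{n-1}{d}(d+1)n^{-1}$ to be at most $2^n$, i.e. the polynomial factor $(d+1)n^{-1}$ must not eat into the budget. Since $\binom{n-1}{d} \le 2^{n-1}$ and $(d+1) \le n \le 2^{n-1}/n \cdot n = \dots$ — more simply, $(d+1) n \le n^2 \le 2^{n}$ for all $n$ large, and for small $n$ the statement can be checked or is vacuous — this is routine but is the step where a reader would want to see the arithmetic. I expect no real obstacle; the lemma is essentially a one-line application of \Cref{lem:counting_trees} once the right forest $F$ is identified, and the main "work" is just the bookkeeping of which data determines the tree and bounding two binomial/polynomial factors by $2^n$.
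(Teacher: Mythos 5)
Your proof is correct, and it takes a genuinely different route from the paper's. You fix a high-degree vertex $v$, fix a size-$d$ subset $S$ of its neighbors, and invoke \Cref{lem:counting_trees} to count trees containing the star $F(v,S)$; summing $n\binom{n-1}{d}(d+1)n^{n-2-d}$ over the choices and using $\binom{n-1}{d}\le 2^{n-1}$ and $(d+1)/n\le 1$ gives $2^{n-1}n^{n-d}\le 2^n n^{n-n/(2c)}$, which is exactly what is needed. The paper instead plugs $x_1=n$, $x_2=\dots=x_n=1$ into the weighted Cayley formula $\sum_{T}x_1^{\deg_T(1)}\cdots x_n^{\deg_T(n)}=x_1\cdots x_n(x_1+\dots+x_n)^{n-2}$ and reads off a tail bound on $\deg_T(1)$ by a Markov-style division by $n^{n/(2c)}$, then multiplies by $n$ for the choice of the high-degree vertex. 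Both arguments share the outer step (fix the high-degree vertex), but differ on the inner estimate: the paper relies on an external generating-function identity, while yours is a union bound over neighbor sets plus the forest-extension count \Cref{lem:counting_trees}, which is already available in the paper; this makes your route more self-contained, at the cost of an extra binomial factor that happens to be absorbed harmlessly into $2^n$. The overcounting is, as you note, harmless for an upper bound, and the edge case $d>n-1$ simply makes the family empty, so there is no gap.
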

\begin{proof}
Number the vertices of $K_n$ by $1,2,\dots,n$.
Recall the following generalization 
of Cayley's tree formula (see, e.g., \cite[(2.4)]{tree_formula}
for the proof):
$$\sum_{T\in\calT_n} x_1^{\deg_T(1)} 
\cdots x_n^{\deg_T(n)}
= x_1x_2\cdots x_n(x_1+x_2+\dots+x_n)^{n-2}.$$
We now show that there are at most $2^nn^{n-n/(2c)-1}$ trees $T$
for which $\deg_T(1) \geq \tfrac n{2c}$.
To see this, plug in $x_1=n$ and $x_2=\dots=x_n=1$
in the above identity to get that there are at most
$$\frac{n(n+(n-1))^{n-2}}{n^{n/(2c)}} 
\leq 2^n n^{n-n/(2c)-1}$$
such trees. The result then follows.
\end{proof}
\section{Putting Everything Together}
\label{sec:main_proof}
\begin{proof}[Proof of \Cref{thm:main}]
We let $N=\binom n2$ and $\calU = \calT_n\subseteq 
\binom{[N]}{n-1}$ be the family of 
all spanning trees with $n$ vertices,
which is $(n/2, n-1)$-spread by \Cref{cor:tree_spread}.

Take $\eps=1$ in \Cref{lem:strong_spread_approx},
and select the corresponding $\delta$.
We assume that $n$ is sufficiently large,
and also assume that $t\geq n^{1-\delta}/\delta$.
(Otherwise, we can use the previous result, 
\cite{trees}).
Then, if $c<\tfrac 1{20}$, then all hypotheses of 
\Cref{lem:strong_spread_approx} is satisfied.
Thus, there exists $\calS\subseteq \binom{[N]}{\leq 2t}$
such that $\calS$ is $t$-intersecting
and $\calA' = \calA\setminus\calA[\calS]$ 
satisfies $|\calA'| \leq 0.001c_t(\calT_n)
= 0.001 \cdot 2^t n^{n-t-2}$ (by \Cref{cor:trivial_case}).

Assume for a contradiction that $|\calA|\geq 2^tn^{n-t-2}$.
We have two cases.
\begin{itemize}
\item \textbf{If $\calS$ is nontrivial},
then we apply \Cref{thm:structure} on $\calS$ 
with $\eps=\tfrac 12$
(in particular, $q=2t$ and $r=n/2$).
Thus, if $c<\tfrac 1{96}$, then
there exists a forest $F$ with $t$ edges such that 
$|\calA[\calS]| \leq \tfrac 12|\calA[F]|$.
Then, we have that 
\begin{align*}
|\calA| &\leq |\calA[\calS]| + |\calA'| \\
&\leq \tfrac 12|\calA[F]| + 0.001\cdot 2^tn^{n-t-2} \\
&\leq \tfrac 12 \cdot 2^t n^{n-t-2} + 0.001\cdot 2^tn^{n-t-2}
\tag{by \Cref{cor:trivial_case}} \\
&< 2^t n^{n-t-2},
\end{align*}
which is a contradiction.
\item \textbf{If $\calS$ is trivial,}
then suppose that every element in $\calS$ 
contains a forest $F$ that has $t$ edges.
Thus, we have $|\calA[S]| \leq |\calA[F]|$.

If $F$ is not a disjoint union of $t$ edges,
then \Cref{cor:trivial_case} (b) gives 
$|\calA[F]| \leq \tfrac 3{4} 2^t n^{n-t-2}$. Thus,
$$|\calA| \leq |\calA[F]| + |\calA'|
\leq 0.76 \cdot 2^t n^{n-t-2},$$
which is a contradiction.

Otherwise, $F$ is a disjoint union of $t$ edges.
If $\calA'=\emptyset$, then $\calA\subseteq\calA[F]$,
and so $|\calA|\leq 2^tn^{n-t-2}$.
Otherwise, assume that $\calA'\neq\emptyset$.
We have three cases.
\begin{itemize}
\item \textbf{If there exists $T_0\in \calA'$ 
that is not $24$-star-like,}
then every tree $T$ that contains $F$
but $T\cap (T_0\setminus F) = \emptyset$
must not be in $\calA$.
By \Cref{lem:counting_avoiding_trees} (a), there are at least 
$0.01|\calT_n[F]|$ such trees, so we have 
\begin{align*}
|\calA| &\leq |\calT_n[F]| - 0.01|\calT_n[F]| + |\calA'| \\
&\leq 0.99\cdot 2^t n^{n-t-2} + 0.01 n^{n-t-2} \\
&< 2^t n^{n-t-2},
\end{align*}
which is a contradiction.
\item \textbf{If every tree in $\calA'$
is $24$-star-like and there exists 
$T_0\in\calA'$ that is not a star,}
then by \Cref{lem:few_star_like_trees},
we get that $|\calA'| \leq 2^n n^{23n/24+1}
< n^{0.98n}$ for all large $n$.

Furthermore, same as previous case,
every tree that contains $F$ but does not meet $T_0\setminus F$
is not in $\calA$.
By \Cref{lem:counting_avoiding_trees}, there are at least 
$n^{n-t-51}$ such trees.
Thus, if $c<\tfrac{1}{100}$, then we have 
\begin{align*}
|\calA| &\leq |\calT_n[F]| - n^{n-t-51} + |\calA'| \\
&\leq 2^tn^{n-t-2} - n^{n-t-100} + n^{0.98n} \\
&< 2^tn^{n-t-2}
\end{align*}
which is a contradiction.
\item \textbf{If every tree in $\calA'$ is a star,}
then since $t\geq 2$, no two stars are $t$-intersecting, 
so assume that $\calA' = \{S\}$ for some star $S$.
Let $v$ be the (unique) non-leaf vertex of $S$.
We claim that there are at least two trees
that contain $F$ and does not meet $F\setminus S$.
To prove this, we split into two cases.
\begin{itemize}
\item \textbf{If $v$ is adjacent to an edge in $F$,}
then at least two vertices in $F$ are not $v$.
Take one such vertex $w$.
Then, complete $F$ into a tree by only adding edges 
incident to $w$. 
Since there are at least two choices of $w$,
we can construct at least two trees.
\item \textbf{If $v$ is an isolated vertex in $F$,}
then any tree containing $F$ and has $v$ as a leaf will work.
\end{itemize}
Therefore $|\calT_n[F]\setminus\calA|\geq 2$,
which implies that $|\calA| \leq |\calT_n[F]|-1 
< 2^tn^{n-t-2}$, a contradiction. \qedhere
\end{itemize}
\end{itemize}
\end{proof}
\printbibliography

@misc{trees,
author = {Frankl, Peter and Hurlbert, Glenn and Ihringer, Ferdinand and Kupavskii, Andrey and Lindzey, Nathan and Meagher, Karen and Pantangi, Venkata},
year = {2025},
month = {02},
pages = {18},
title = {Intersecting Families of Spanning Trees},
note = {Preprint},
url = {https://arxiv.org/pdf/2502.08128},
}

@article {spread_approx,
    AUTHOR = {Kupavskii, Andrey and Zakharov, Dmitrii},
     TITLE = {Spread approximations for forbidden intersections problems},
   JOURNAL = {Adv. Math.},
  FJOURNAL = {Advances in Mathematics},
    VOLUME = {445},
      YEAR = {2024},
     PAGES = {Paper No. 109653, 29},
      ISSN = {0001-8708,1090-2082},
   MRCLASS = {05D05 (05C35 05C65 05D40)},
  MRNUMBER = {4732823},
MRREVIEWER = {J\'ozsef\ Balogh},
       DOI = {10.1016/j.aim.2024.109653},
}

@misc{partitions,
author = {Kupavskii, Andrey},
year = {2023},
month = {08},
pages = {20},
title = {{Erd\H os--Ko--Rado type results for partitions via spread approximations}},
url = {https://arxiv.org/pdf/2502.08128},
note = {Preprint}
}

@misc{permutations_better,
author = {Kupavskii, Andrey},
year = {2024},
month = {05},
pages = {17},
title = {An almost complete t-intersection theorem for permutations},
url = {https://arxiv.org/pdf/2405.07843},
note = {Preprint}
}

@misc{spread_lemma,
Author = {Tao, Terrence},
Title = {The sunflower lemma via Shannon entropy
},
year = {2020},
url = {https://terrytao.wordpress.com/2020/07/20/the-sunflower-lemma-via-shannon-entropy/},
}

@incollection {lll_trees,
    AUTHOR = {Lu, Linyuan and Mohr, Austin and Sz\'ekely, L\'aszl\'o},
     TITLE = {Quest for negative dependency graphs},
 BOOKTITLE = {Recent advances in harmonic analysis and applications},
    SERIES = {Springer Proc. Math. Stat.},
    VOLUME = {25},
     PAGES = {243--258},
 PUBLISHER = {Springer, New York},
      YEAR = {2013},
      ISBN = {978-1-4614-4565-4; 978-1-4614-4564-7},
   MRCLASS = {05C65 (05C05 05C35 05C70)},
  MRNUMBER = {3066891},
MRREVIEWER = {Lyuben\ R.\ Mutafchiev},
       DOI = {10.1007/978-1-4614-4565-4\_21},
       URL = {https://doi.org/10.1007/978-1-4614-4565-4_21},
}

@incollection {lopsided_lll,
    AUTHOR = {{Erd\H os}, Paul and Spencer, Joel},
     TITLE = {Lopsided {L}ov\'asz local lemma and {L}atin transversals},
      NOTE = {ARIDAM III (New Brunswick, NJ, 1988)},
   JOURNAL = {Discrete Appl. Math.},
  FJOURNAL = {Discrete Applied Mathematics. The Journal of Combinatorial
              Algorithms, Informatics and Computational Sciences},
    VOLUME = {30},
      YEAR = {1991},
    NUMBER = {2-3},
     PAGES = {151--154},
      ISSN = {0166-218X,1872-6771},
   MRCLASS = {05D15 (05B30)},
  MRNUMBER = {1095369},
MRREVIEWER = {Katherine\ Heinrich},
       DOI = {10.1016/0166-218X(91)90040-4},
       URL = {https://doi.org/10.1016/0166-218X(91)90040-4},
}

@book {alon_spencer,
    AUTHOR = {Alon, Noga and Spencer, Joel H.},
     TITLE = {The probabilistic method},
    SERIES = {Wiley-Interscience Series in Discrete Mathematics and
              Optimization},
   EDITION = {Second},
      NOTE = {With an appendix on the life and work of Paul Erd\H os},
 PUBLISHER = {Wiley-Interscience [John Wiley \& Sons], New York},
      YEAR = {2000},
     PAGES = {xviii+301},
      ISBN = {0-471-37046-0},
   MRCLASS = {60-02 (05C80 60C05 60F99 60G42)},
  MRNUMBER = {1885388},
MRREVIEWER = {Bert\ Fristedt},
       DOI = {10.1002/0471722154},
       URL = {https://doi.org/10.1002/0471722154},
}

@book {tree_formula,
    AUTHOR = {van Lint, J. H. and Wilson, R. M.},
     TITLE = {A course in combinatorics},
   EDITION = {Second},
 PUBLISHER = {Cambridge University Press, Cambridge},
      YEAR = {2001},
     PAGES = {xiv+602},
      ISBN = {0-521-00601-5},
   MRCLASS = {05-01 (90B10)},
  MRNUMBER = {1871828},
       DOI = {10.1017/CBO9780511987045},
       URL = {https://doi.org/10.1017/CBO9780511987045},
}

@article {erdos_ko_rado,
    AUTHOR = {{Erd\H os}, P. and Ko, Chao and Rado, R.},
     TITLE = {Intersection theorems for systems of finite sets},
   JOURNAL = {Quart. J. Math. Oxford Ser. (2)},
  FJOURNAL = {The Quarterly Journal of Mathematics. Oxford. Second Series},
    VOLUME = {12},
      YEAR = {1961},
     PAGES = {313--320},
      ISSN = {0033-5606,1464-3847},
   MRCLASS = {04.60},
  MRNUMBER = {140419},
MRREVIEWER = {S.\ Ginsburg},
       DOI = {10.1093/qmath/12.1.313},
       URL = {https://doi.org/10.1093/qmath/12.1.313},
}

@article {sets,
    AUTHOR = {Ahlswede, Rudolf and Khachatrian, Levon H.},
     TITLE = {The complete intersection theorem for systems of finite sets},
   JOURNAL = {European J. Combin.},
  FJOURNAL = {European Journal of Combinatorics},
    VOLUME = {18},
      YEAR = {1997},
    NUMBER = {2},
     PAGES = {125--136},
      ISSN = {0195-6698,1095-9971},
   MRCLASS = {05D05 (04A20)},
  MRNUMBER = {1429238},
MRREVIEWER = {G.\ F.\ Clements},
       DOI = {10.1006/eujc.1995.0092},
       URL = {https://doi.org/10.1006/eujc.1995.0092},
}
\end{document}